\theoremstyle{plain}
\newtheorem{theorem}{Theorem}[section]
\newtheorem{lemma}[theorem]{Lemma}
\newtheorem{proposition}[theorem]{Proposition}
\newtheorem{corollary}[theorem]{Corollary}
\theoremstyle{definition}
\newtheorem{remark}[theorem]{Remark}
\theoremstyle{definition}
\def\fnum{equation}
\numberwithin{equation}{section}
\begin{document}
\title{The catenoid estimate and its geometric applications}

\author{Daniel Ketover}\address{Imperial College London\\Huxley Building, 180 Queen's Gate, London SW7 2RH}
\author{Fernando C. Marques}\address{Department of Mathematics\\Princeton University\\Princeton, NJ 08544}
\author{Andr\'{e} Neves}

\thanks{D.K. was partially supported by NSF-PRF DMS-1401996 as well as by ERC-2011-StG-278940. F.C.M. was partially supported by NSF-DMS 1509027. A.N. was partially supported by ERC-2011-StG-278940 as well as an EPSRC Programme Grant EP/K00865X/1.}

 \email{d.ketover@imperial.ac.uk}
\email{coda@math.princeton.edu}
\email{a.neves@imperial.ac.uk}
\maketitle
\begin{abstract}
We prove a sharp area estimate for catenoids that allows us to rule out the phenomenon of multiplicity in min-max theory in several settings.  We apply it to prove that i) the width of a three-manifold with positive Ricci curvature is realized by an orientable minimal surface ii) minimal genus Heegaard surfaces in such manifolds can be isotoped to be minimal and iii) the ``doublings'' of the Clifford torus by Kapouleas-Yang can be constructed variationally by an equivariant min-max procedure.   In higher dimensions we also prove that the width of manifolds with positive Ricci curvature is achieved by an index $1$ orientable minimal hypersurface.
\end{abstract}

\section{Introduction}
A central difficulty in min-max theory is the phenomenon of multiplicity.   Namely, it may happen that the min-max limit associated to one homotopy class in the space of surfaces is simply an integer multiple of another.  Thus even as one might find distinct homotopy classes in the space of surfaces, one might not be producing geometrically distinct critical points of the area functional.   This issue is already present in the problem of finding closed geodesics.  

In this paper we use a sharp area estimate for catenoids that allows us to exclude multiplicity in several cases.  The ``catenoid estimate" asserts that the area of the unstable catenoid joining two parallel circles in $\mathbb{R}^3$ exceeds the area of the two flat disks subquadratically in the separation between the circles. This enables us in arbitrary three-manifolds to symmetrically foliate a neighborhood around an unstable minimal surface by surfaces with areas strictly less than twice the central minimal surface.  The idea is to take the boundary of a tubular neighborhood of the minimal surface with a small neck attached, which we then ``open up.''  

Indeed, for a closed embedded unstable minimal surface $\Sigma$ embedded in a three-manifold, denote by $T_\epsilon\Sigma$ the $\epsilon$-tubular neighborhood about $\Sigma$.  As long as $\epsilon$ is sufficiently small, $\partial (T_\epsilon\Sigma)$ is diffeomorphic to two disjoint copies of $\Sigma$ if $\Sigma$ is orientable.  If the ambient manifold has positive Ricci curvature, say, then the area of the surfaces comprising $\partial (T_\epsilon\Sigma)$ is of order $\epsilon^2$ below $2|\Sigma|$.   By adding a neck to $\partial (T_\epsilon\Sigma)$  and ``opening it up," the point is that we add area at most on the order $\epsilon^2/(-\log\epsilon)$ before areas start to go down, and thus the areas in our family of surfaces sweeping out $T_\epsilon\Sigma$ have areas strictly below $2|\Sigma|$.  If the ambient manifold does not have positive Ricci curvature, we can instead use the first eigenfunction of the Jacobi operator to produce our parallel surfaces, and the same result applies.  The existence of such explicit sweepouts is what allows us to rule out multiplicity in several cases.  

In higher dimensions analogous results hold but the catenoid is not needed in the construction.  Indeed, if $\Sigma^n$ is an $n$-dimensional minimal hypersurface in $M^{n+1}$, then $\partial (T_\epsilon\Sigma)$ has $n$-dimensional area of order $\epsilon^2$ below $2\mathcal{H}^n(\Sigma^n)$.  But by gluing in a small cylinder around a point $p\in\Sigma^n$ connecting the two components of $\partial (T_\epsilon\Sigma^n)$ and removing the two $n$-balls from $\partial (T_\epsilon\Sigma^n)$, one only needs to add area of order $\epsilon^n$ before the areas start to go down.  Since $\epsilon^n$ is of smaller order than $\epsilon^2$, when ``opening up the hole" one can make a sweepout with all areas strictly less than $2\mathcal{H}^n(\Sigma^n)$.   Dimension two is the ``critical dimension" when more is needed than the standard area comparison argument.

One basic application of the catenoid estimate is that we can rule out one-parameter min-max sequences from collapsing with multiplicity $2$ to a non-orientable minimal surface:
\begin{theorem}\label{onesided}
If $M$ is a three-manifold with positive Ricci curvature, and $\Gamma$ a Heegaard surface realizing the Heegaard genus of $M$, then $\Gamma$ is isotopic to an embedded minimal surface $\Sigma$ of index $1$.  Moreover, $\Sigma$ is the min-max limit obtained via Heegaard sweepouts of $M$ determined by $\Gamma$.
\end{theorem}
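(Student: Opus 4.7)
\textbf{Proof plan for Theorem \ref{onesided}.} The strategy combines one-parameter min-max theory with the catenoid estimate and a topological identification step. First I would set up the min-max framework: the Heegaard splitting $M = H_1 \cup_\Gamma H_2$ yields a natural continuous sweepout $\{\Gamma_t\}_{t \in [0,1]}$ of $M$ by surfaces isotopic to $\Gamma$, degenerating at $t=0,1$ to the spines of $H_1$ and $H_2$. Let $W$ denote the associated min-max width. The Simon-Smith / Colding--De Lellis theory produces a smooth embedded minimal surface $\Sigma = \sum_i n_i \Sigma_i$ with $\sum_i n_i |\Sigma_i| = W$ and uniformly bounded genus. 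Since positive Ricci curvature precludes stable minimal surfaces, each $\Sigma_i$ has Morse index at least one; the standard index bounds for min-max limits in the one-parameter setting then force a single unstable component $\Sigma$ of index exactly one.

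The heart of the proof will be to exclude multiplicity $n \ge 2$ using the catenoid estimate described in the introduction. Assume first that $\Sigma$ is orientable and appears with multiplicity $n \ge 2$. In the $\epsilon$-tubular neighborhood $T_\epsilon \Sigma$ one takes the two parallel copies $\partial T_\epsilon \Sigma$ and opens them up through a small catenoidal neck. The catenoid estimate controls the added area by a quantity of order $\epsilon^2/|\log \epsilon|$, while positive Ricci forces $|\partial T_\epsilon \Sigma| < 2|\Sigma| - c \epsilon^2$, so the opened-up family of surfaces has area strictly less than $2|\Sigma| \le n|\Sigma| \le W$. Gluing this family to a standard sweepout of $M \setminus T_\epsilon \Sigma$ produces a competitor sweepout homotopic to $\{\Gamma_t\}$ with maximal area strictly below $W$, contradicting the definition of the width. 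If instead the limit is $2\Sigma_0$ with $\Sigma_0$ one-sided and non-orientable, essentially the same catenoid construction applies after passing to the connected orientable double cover $\partial T_\epsilon \Sigma_0$, again producing a competitor sweepout of smaller maximal area. Consequently $\Sigma$ must be orientable, connected, and of multiplicity one.

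Finally I would identify $\Sigma$ with $\Gamma$ up to isotopy. The genus bounds for min-max limits from Heegaard sweepouts, due to De Lellis-Pellandini and Ketover, give $g(\Sigma) \le g(\Gamma)$. Because $\Sigma$ is the limit with multiplicity one of genus $g(\Gamma)$ surfaces obtained from a Heegaard sweepout, it bounds regions on both sides which are themselves handlebodies, so $\Sigma$ is itself a Heegaard surface of $M$. Since $g(\Gamma)$ is the Heegaard genus, equality $g(\Sigma) = g(\Gamma)$ holds, and a uniqueness argument for Heegaard splittings of the same minimal genus in positive Ricci three-manifolds (which have particularly rigid topology by Hamilton) identifies $\Sigma$ with $\Gamma$ via an ambient isotopy of $M$.

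The main obstacle I anticipate is in the multiplicity-exclusion step: the local catenoid construction must be woven into a genuinely \emph{global} competitor sweepout in the \emph{correct} homotopy class, with uniform area control across all fibers and a careful transition between the tubular neighborhood sweepout and the complementary handlebody sweepout. The one-sided case is particularly delicate because the tubular neighborhood of $\Sigma_0$ is itself non-orientable and so requires a passage to the orientable double cover before the catenoid estimate can be invoked, and because the resulting family of orientable doubles must be glued to an honest Heegaard-type sweepout with matched topology and bounded area.
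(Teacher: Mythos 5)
Your high-level strategy matches the paper's: run Simon--Smith min-max on the Heegaard saturation, use positive Ricci plus Frankel to get a connected limit, pass to the orientable double cover when the limit is one-sided, and use the catenoid estimate to build a $\tau$-equivariant competitor sweepout whose slices all have area strictly below $2|\Sigma|$. The fact that you flag the homotopy-class issue as ``the main obstacle'' shows you see where the real work is — but that is precisely the step you leave unresolved, and it is the substantive new content of the proof, so the proposal as written has a genuine gap.

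Concretely: after the catenoid estimate produces a sweepout of the tubular neighborhood $T_\epsilon\Sigma$ (and, in the non-orientable case, after projecting the equivariant sweepout of $\tilde M$ back down), you must know that the resulting family of surfaces is in the \emph{same saturated class} as the original Heegaard sweepout — otherwise you have not contradicted the definition of the width. The paper establishes this via a dedicated topological claim: for $\epsilon$ small, $\Gamma$ is isotopic to $\partial T_\epsilon(\Sigma)$ with a single vertical handle attached. Proving that claim requires (i) Casson--Gordon's theorem that a minimal-genus Heegaard splitting of an irreducible manifold without incompressible surfaces is strongly irreducible (positive Ricci rules out incompressible two-sided minimal surfaces), (ii) the surgery-convergence analysis of \cite{Ke1} showing the min-max sequence is isotopic, after neck-pinches, to $\bigcup_i\partial T_{\epsilon_i}(\Sigma)$, with strong irreducibility forcing exactly one tube $\partial T_{\epsilon_1}(\Sigma)$, and (iii) the structural result in \cite{H} that attaching one vertical handle to $\partial T_{\epsilon}(\Sigma)$ recovers $\Gamma$ up to isotopy. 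None of this appears in your outline. Without it, the ``competitor sweepout'' you glue together could a priori lie in the wrong homotopy class and yield no contradiction.

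Two secondary issues. First, in the orientable multiplicity-$\ge 2$ case you propose to use the catenoid estimate, but the catenoid sweepout terminates at a one-complex $\mathcal G$ inside $T_\epsilon\Sigma$, whereas $M\setminus T_\epsilon\Sigma$ has two handlebody components that still need to be swept out; this case is in fact handled directly by the parallel-surface foliation of \cite{MN} (Lemma~3.5) and does not require the catenoid estimate at all. Second, your final identification of $\Sigma$ with $\Gamma$ appeals to a uniqueness theorem for minimal-genus Heegaard splittings of spherical space forms; the paper needs nothing so heavy — once the topological claim is in place, $g(\Sigma)<g(\Gamma)$ would exhibit a lower-genus Heegaard surface and $g(\Sigma)=g(\Gamma)$ forces the surgeries in \cite{Ke1} to be trivial, giving the isotopy directly. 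Also, the phrase ``standard index bounds for min-max limits in the one-parameter Simon--Smith setting'' invokes a tool not available at the time; the paper deduces connectedness from Frankel's theorem and the index bound from \cite{MN} Lemma~3.5.
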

\begin{remark}
Theorem \ref{onesided} was proved in \cite{MN} (Theorem 3.4) under the additional assumption that $M$ contains no non-orientable embedded minimal surfaces. 
\end{remark}
In particular, we obtain:
\begin{corollary}
$\mathbb{RP}^3$ endowed with a metric of positive Ricci curvature admits a minimal embedded index $1$ torus.
\end{corollary}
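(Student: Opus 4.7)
The plan is to apply Theorem \ref{onesided} directly, with the only non-trivial point being to identify the Heegaard genus of $\mathbb{RP}^3$. First, I would recall that $\mathbb{RP}^3$ is the lens space $L(2,1)$, and as such admits a standard genus one Heegaard splitting: write $\mathbb{RP}^3 = S^3/\{\pm 1\}$ and descend the standard genus one Heegaard splitting of $S^3$ into two solid tori, which is invariant under the antipodal map, to obtain a splitting of $\mathbb{RP}^3$ into two solid tori glued along a torus $\Gamma \cong T^2$. Since $\mathbb{RP}^3$ is not simply connected it cannot be $S^3$, so its Heegaard genus is not zero, and hence equals one. Thus $\Gamma$ is a Heegaard surface of $\mathbb{RP}^3$ realizing the Heegaard genus.

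Next, I would invoke Theorem \ref{onesided} with $M = \mathbb{RP}^3$ endowed with the given metric of positive Ricci curvature and with the Heegaard surface $\Gamma$ described above. The theorem provides an embedded minimal surface $\Sigma$ of index $1$ that is isotopic to $\Gamma$. Because isotopy preserves the diffeomorphism type of a surface and $\Gamma$ is a torus, $\Sigma$ is likewise an embedded torus. This yields the desired minimal embedded index $1$ torus in $(\mathbb{RP}^3, g)$.

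There is essentially no serious obstacle here beyond the classical identification of the Heegaard genus; the only thing one should be careful about is confirming that the splitting one writes down really does realize the Heegaard genus (as opposed to being a stabilization), which is handled by the non-triviality of $\pi_1(\mathbb{RP}^3)$ ruling out genus zero. All the analytic content — the existence, embeddedness, index bound, and isotopy class of $\Sigma$ — is packaged inside Theorem \ref{onesided}, so the corollary reduces to this topological remark.
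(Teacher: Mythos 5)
Your proof is correct and is essentially the argument the paper intends (the corollary is stated as an immediate consequence of Theorem \ref{onesided}, with the implicit step being that $\mathbb{RP}^3$ has Heegaard genus one). The identification of the genus-one Heegaard splitting of $\mathbb{RP}^3$ and the observation that $\pi_1(\mathbb{RP}^3)\neq 1$ rules out genus zero is exactly what is needed before invoking the theorem.
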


\begin{remark}
Some curvature assumption is necessary in Theorem \ref{onesided}.  Ritor\'{e}-Ros \cite{RR} have shown that there are flat three-tori not admitting index $1$ minimal surfaces of genus three.  The min-max minimal surface coming from a genus three Heegaard splitting must then degenerate to a union of tori in such manifolds.  In other words, a $3$-manifold need not contain a minimal Heegaard surface realizing its Heegaard genus.   
\end{remark}

Similarly, we have:
\begin{theorem}\label{almgrenpitts}
For $3\leq (n+1) \leq 7$, the Almgren-Pitts width (with $\mathbb{Z}$ or $\mathbb{Z}_2$ coefficients) of an orientable  $(n+1)$-manifold with positive Ricci curvature is achieved by an orientable index $1$ minimal hypersurface with multiplicity $1$.
\end{theorem}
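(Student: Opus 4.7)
The strategy is to combine Almgren-Pitts one-parameter min-max with the higher-dimensional catenoid-type estimate sketched in the introduction. I would first apply Almgren-Pitts in the dimension range $3 \le n+1 \le 7$, using Schoen-Simon regularity, to realize the width $W$ as the mass of a stationary integral $n$-varifold $V$ whose support is a smooth closed embedded minimal hypersurface. Positive Ricci curvature then allows Frankel's theorem to force a single component $\Sigma_0$, so that $V = k\,[\Sigma_0]$ for some integer $k \ge 1$. It remains to show that $\Sigma_0$ is two-sided (hence orientable), that $k = 1$, and that $\mathrm{index}(\Sigma_0) = 1$.

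To rule out $k \ge 2$ in the two-sided case, I would use positive Ricci to observe that $\Sigma_0$ is strictly unstable, and use the first Jacobi eigenfunction $\varphi > 0$ to build a smooth family of parallel hypersurfaces $\{\Sigma_0^t\}_{|t|\le\epsilon}$ in a tubular neighborhood $T_\epsilon(\Sigma_0)$ satisfying $\mathcal{H}^n(\Sigma_0^{\pm\epsilon}) \le \mathcal{H}^n(\Sigma_0) - c\,\epsilon^2$. Attaching a thin cylindrical neck of radius $\epsilon$ around a point $p \in \Sigma_0$ joining $\Sigma_0^{\epsilon}$ to $\Sigma_0^{-\epsilon}$ costs area of order $\epsilon^n$; since $n \ge 2$, this is strictly dominated by the quadratic deficit for $\epsilon$ small. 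Opening up the neck would yield a sweepout $\{\Gamma_s\}_{s\in[0,1]}$ of $T_\epsilon(\Sigma_0)$ with
\begin{equation*}
\sup_{s\in[0,1]} \mathcal{H}^n(\Gamma_s) < 2\,\mathcal{H}^n(\Sigma_0).
\end{equation*}
Splicing this local sweepout into an almost-optimal global discrete sweepout via the Almgren-Pitts interpolation framework (in the style of Marques-Neves) would then produce a competitor whose max mass is strictly less than $k\,\mathcal{H}^n(\Sigma_0) = W$ whenever $k \ge 2$, contradicting the definition of the width and forcing $k = 1$. If instead $\Sigma_0$ were one-sided, $\partial T_\epsilon(\Sigma_0)$ is the connected orientation double cover with mass close to $2\,\mathcal{H}^n(\Sigma_0)$, so the same construction applied on the double cover would yield a local sweepout of max mass $< 2\,\mathcal{H}^n(\Sigma_0)$, contradicting $W \ge 2\,\mathcal{H}^n(\Sigma_0)$ in this case; hence $\Sigma_0$ is two-sided, and therefore orientable in the orientable ambient.

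Finally, $\mathrm{index}(\Sigma_0) \ge 1$ because positive Ricci makes $\Sigma_0$ unstable, and $\mathrm{index}(\Sigma_0) \le 1$ follows from the Marques-Neves index estimate for one-parameter min-max, now applicable since multiplicity one has been established. The hard part will be the splicing step: one must transfer the smooth local sweepout into the discrete Almgren-Pitts framework and verify that the modified global discrete sweepout still carries a nontrivial class in the appropriate $\mathbb{Z}$ or $\mathbb{Z}_2$ cohomology while strictly reducing its max mass. Since the catenoid-type gain is only quadratic in $\epsilon$, all interpolation errors must be tracked sharply for the contradiction to go through.
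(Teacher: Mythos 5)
Your broad strategy (Zhou's dichotomy, passing to the orientation double cover, cutting discs and gluing in thin necks around points, then opening the hole to build a sweepout strictly below twice the area, and finally splicing into the Almgren--Pitts framework) is the same as the paper's. But there is a genuine error in the key area estimate that defeats the argument in the critical case $n+1 = 3$.

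You claim that attaching a cylindrical neck around a point costs area of order $\epsilon^n$ and that "since $n \geq 2$, this is strictly dominated by the quadratic deficit $c\,\epsilon^2$." This is false when $n = 2$: there $\epsilon^n = \epsilon^2$ is the \emph{same} order as the deficit, and whether you gain or lose depends on the constants. Indeed, if you build the sweepout by opening the neck from radius $0$ to a fixed radius $R$, the slice where the neck has radius $t$ has area roughly
\begin{equation*}
2\mathcal{H}^n(\Sigma_0) - 2c\epsilon^2 + C\epsilon\, t^{n-1} - c' t^{n},
\end{equation*}
and the maximum of $C\epsilon t^{n-1} - c' t^{n}$ over $t$ is of order $\epsilon^n$. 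For $n\geq 3$ this is $o(\epsilon^2)$ and the quadratic deficit wins, which is exactly the computation in Section~4 of the paper. For $n=2$ the maximum is comparable to $\epsilon^2$ with a constant you do not control, so you cannot conclude $\sup_s \mathcal{H}^n(\Gamma_s) < 2\mathcal{H}^n(\Sigma_0)$ from this naive estimate. This is precisely why the paper develops the catenoid estimate (Theorem~\ref{catenoidinthreemanifold}): replacing the blunt cylinder by a logarithmically cut-off graph reduces the cost of opening the neck to order $\epsilon^2/(-\log\epsilon)$, which \emph{is} of strictly smaller order than $\epsilon^2$, and only then does the sweepout stay strictly below $2\mathcal{H}^2(\Sigma_0)$. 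Without this, your proposal proves the theorem for $4 \leq n+1 \leq 7$ but fails for $n+1 = 3$, which is the case the paper is really about.

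A smaller point: you set up the argument as ruling out multiplicity $k\geq 2$ for an arbitrary two-sided component and, separately, the one-sided case. The paper's route is slightly cleaner here: Zhou's Theorem~1.1 already reduces to the single alternative of a non-orientable $\Gamma$ occurring with multiplicity two, so the only thing to exclude is that one case, via the equivariant sweepout in the orientation double cover. Also, the index upper bound is not "the Marques--Neves index estimate now applicable since multiplicity one is established" — in the paper it comes directly out of the existence of a sweepout with $\sup_t |\Sigma_t| = |\Sigma|$ attained only at $\Sigma$, as in Lemma~3.5 of \cite{MN}, and one should be explicit that this requires the tubular-neighborhood foliation one has just built.
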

The catenoid estimate is needed when $n=2$.
\begin{remark}
Zhou \cite{Z} had proved that the width is achieved by either an orientable index $1$ minimal hypersurface of multiplicity one or a double cover of a non-orientable minimal hypersurface with multiplicity two (see also Mazet-Rosenberg \cite{MR} for an analysis of the least area minimal hypersurface). Theorem \ref{almgrenpitts} rules the second possibility out.
\end{remark}
The classical one-parameter setting for min-max theory is that of sweepouts $\{\Sigma_t\}_{t\in [0,1]}$ such that $\Sigma_0=\Sigma_1=0$.   If one considers the Almgren-Pitts theory with $\mathbb{Z}_2$ coefficients and allows periodic sweepouts, then the analogous statement to Theorem \ref{almgrenpitts} is false.  In round $\mathbb{RP}^3$, for instance,  by rotating an $\mathbb{RP}^2$ one can produce a non-trivial cycle comprised of minimal projective planes of area $2\pi$, each realizing the $\mathbb{Z}_2$ Almgren-Pitts width for that kind of sweepout.

The catenoid estimate and the equivariant min-max theory of \cite{Ke2}  allow one also to give a min-max construction of ``doublings" of minimal surfaces.  We give a new construction and variational interpretation of the Kapouleas-Yang \cite{KY} doublings of the Clifford torus in the round three-sphere that was first proposed in 1988 by Pitts-Rubinstein \cite{PR1}. Pitts-Rubinstein also noted (see Remark 4 in \cite{PR1}) that results analogous to our catenoid estimate would be necessary to carry out the construction. Indeed, the main issue is that when running  an equivariant min-max procedure one must rule out the min-max sequence from collapsing with multiplicity two to the Clifford torus. The explicit sweepout produced by the catenoid estimate stays below in area twice the Clifford torus and thus we can obtain:
\begin{theorem}\label{doubledtori}
For each $g\geq 2$, there exists a closed embedded minimal surface $\Sigma_g$ resembling a doubled Clifford torus in $\mathbb{S}^3$.  The area of $\Sigma_g$ is strictly less than $4\pi^2$ (twice the area of the Clifford torus $C$).  Moreover $\Sigma_g\rightarrow 2C$ in the sense of varifolds as $g\rightarrow\infty$ and the genus of $\Sigma_g$ also approaches infinity.  The surfaces $\Sigma_g$ arise as min-max limits for a suitable equivariant saturation of sweepouts of $\mathbb{S}^3$.
\end{theorem}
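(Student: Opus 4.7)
The plan is to realize each $\Sigma_g$ as the min-max limit of an equivariant saturation built around the Clifford torus $C$, with the catenoid estimate providing the strict area bound needed to rule out collapse with multiplicity $2$ onto $C$.

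For each $g$, I would first choose a finite symmetry group $G_g \subset \mathrm{Isom}(\mathbb{S}^3)$ preserving $C$. Writing $\mathbb{S}^3 = \{(z_1, z_2) \in \mathbb{C}^2 : |z_1|^2 + |z_2|^2 = 1\}$ and $C = \{|z_1| = |z_2| = 1/\sqrt{2}\}$, take $G_g$ to be generated by a subgroup $\mathbb{Z}_k \times \mathbb{Z}_k$ of rotations of each circle factor together with the involution $\sigma : (z_1, z_2) \mapsto (z_2, z_1)$ swapping the two solid tori $V_\pm$ bounded by $C$. The integer $k = k(g)$ is chosen so that the combinatorics of the $G_g$-orbits on $C$, which dictate the pattern of necks of the doubling, force the target genus $g$.

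Next I would build a $G_g$-equivariant sweepout $\{\Phi_t\}$ of $\mathbb{S}^3$ as follows. Take a tubular neighborhood $T_\epsilon C$ and produce a one-parameter family of $G_g$-invariant surfaces inside $T_\epsilon C$ using the catenoid-estimate construction from the introduction: start from two parallel copies of $C$ near $\partial T_\epsilon C$, attach a catenoidal neck equivariantly at each $G_g$-orbit, and open them up. By the catenoid estimate the maximum area along this family is strictly below $2|C| - \delta$ for some $\delta = \delta(g) > 0$. Cap off on either side by the standard equivariant sweepouts of $V_\pm$ down to their core circles, whose areas stay well below $4\pi^2$. Let $\Pi_g$ be the $G_g$-equivariant saturation of $\{\Phi_t\}$; the equivariant width satisfies $W_g < 4\pi^2$. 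Applying the equivariant min-max theorem of \cite{Ke2} to $\Pi_g$ then produces a smooth, closed, embedded, $G_g$-invariant minimal surface $\Sigma_g$ of area $W_g < 4\pi^2$.

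The strict width bound $W_g < 2|C|$ precludes $\Sigma_g = 2C$; combined with the $\sigma$-equivariance and the non-triviality of $\{\Phi_t\}$ across $C$, this forces $\Sigma_g$ to genuinely cross $C$ with necks concentrated near the $G_g$-orbits, so it resembles a Kapouleas--Yang doubling. For the asymptotic claims, letting $k(g) \to \infty$ one shows $W_g \to 4\pi^2$ by comparing with thinner-neck sweepouts, hence $\Sigma_g \to 2C$ as varifolds; the genus grows with the number of necks by an Euler characteristic count applied to the transverse neck structure. The main obstacle will be the identification step: one has to rule out partial collapse and verify that the equivariant limit is a single embedded surface with the expected doubled topology rather than, say, a union of geodesic spheres plus a stray copy of $C$. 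The catenoid estimate eliminates the dominant degeneration (full collapse onto $2C$), and the equivariant regularity theory of \cite{Ke2} together with the transverse neck pattern forced by $G_g$ should control the residual cases.
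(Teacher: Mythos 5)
Your overall strategy matches the paper's: realize $\Sigma_g$ as the equivariant min-max limit for the group $G_m$ generated by $\mathbb{Z}_m\times\mathbb{Z}_m$ torus rotations and the swap involution, build a $G_m$-equivariant sweepout with $m^2$ necks, use the catenoid estimate to push all areas strictly below $4\pi^2$, and invoke the equivariant min-max theorem of \cite{Ke2}. That much is right and is exactly what the paper does. However, the identification arguments at the end of your proposal — precisely the ones you flag as ``the main obstacle'' — have three genuine gaps that the paper closes by specific, different means.

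First, you do not actually rule out $\Sigma_g$ occurring with multiplicity greater than one. The width bound $W_g < 4\pi^2$ alone only prevents the limit from being $2C$; it does not prevent, say, $3\cdot(\text{great sphere})$ or $2\cdot(\text{something of area} < 2\pi^2)$. The paper's argument is: if the multiplicity is $k>1$ then $|\Sigma_g| < |C| = 2\pi^2$, and by the resolution of the Willmore conjecture \cite{MN2} the only embedded minimal surface of area below $2\pi^2$ is a round equator, which is not $G_m$-invariant; hence $k=1$. This use of Willmore is essential and missing from your proposal. Second, your route to $\Sigma_g\rightarrow 2C$ is circular: ``thinner-neck sweepouts'' only give \emph{upper} bounds on $W_g$, which you already have, and they cannot establish $W_g\rightarrow 4\pi^2$ (that requires a lower bound). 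The paper instead argues directly on a subsequential varifold limit $V$: invariance of $\mathrm{supp}\,V$ under the limiting torus action $\mathbb{S}^1\times\mathbb{S}^1$ and $\tau$, stationarity of $V$, and the fact that off-Clifford CMC tori are not minimal force $\mathrm{supp}\,V\subset C$; the Constancy Theorem gives $V = kC$; the width bound gives $k\in\{1,2\}$; and the equivariant min-max theorem's parity constraint on components carrying $\mathbb{Z}_2$-isotropy rules out $k=1$. Third, the genus growth: the genus bound \eqref{genusbound} from min-max is an \emph{upper} bound $g(\Sigma_g)\leq m^2+1$, so an Euler-characteristic count on the sweepout does not force $g(\Sigma_g)\rightarrow\infty$. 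The paper instead notes that $\Sigma_g\rightarrow 2C$ with area ratio $2$, and by the Choi--Schoen compactness theorem \cite{CS} a genus bound would give smooth convergence to a smooth embedded minimal surface, contradicting multiplicity-two convergence; hence the genus is unbounded.
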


\begin{remark}
In fact, for $g$ large enough, one can show that the genus of $\Sigma_g$ is $g^2+1$ and the surfaces consist of two tori parallel to the Clifford torus joined by $g^2$ necks placed symmetrically along a grid.  See \cite{Ke2} for more details.
\end{remark}

\begin{remark}
The catenoid estimate suggests heuristically that unstable minimal surfaces should not arise with multiplicity in the min-max theory.   This has been explicitly conjectured for generic metrics by the second and third named authors in \cite{MN4}, where they have confirmed it in the Almgren-Pitts setting when the number of parameters is one.  We know of no instance where an unstable component occurs with multiplicity except in the case of geodesics (see \cite{A}).
\end{remark}

The organization of this paper is as follows.  In Section 2 we prove the catenoid estimate, first in $\mathbb{R}^3$, then in an arbitrary three-manifold. In Section 3 we give the applications.  In Section 4 we prove Theorem \ref{almgrenpitts} in higher dimensions.
\section{Catenoid estimate}
\subsection{Catenoid estimate in $\mathbb{R}^3$}
First we explain the catenoid estimate in $\mathbb{R}^3$.  We will not need the results of this section for the generalization to arbitrary three-manifolds or in the rest of the paper, but we include it since it is the motivating heuristic. 

Consider the two parallel circles in $\mathbb{R}^3$:
\begin{align}
&C_1(r,h):= \{(h,y,z)\;|\; y^2+z^2=r^2\}\\
&C_2(r,h):= \{(-h,y,z)\;|\; y^2+z^2=r^2\}
\end{align}
As long as $h$ is small compared to $r$, there are two stable minimal surfaces with boundary $C_1(r,h)\cup C_2(r,h)$.  The first is the surface $S_1(r,h)$ consisting of the union of two stable flat disks, with area $2\pi r^2$.  The second minimal surface $S_2(r,h)$ is the stable catenoid.  As $h$ tends to zero, the area of $S_2(r,h)$ converges to $0$.   \\ \indent
By analogy with Morse theory, given the two stable minimal surfaces with the same boundary, one expects to find an unstable minimal surface between them.  One can consider sweepouts by annuli of the region between the stable catenoid and two disks (degenerating to two disks at one side).   Running a min-max procedure, one finds the unstable catenoid  $U(r,h)$ as the surface realizing the width for this family.  The catenoid estimate concerns the area of $U(r,h)$ when $h$ is small.   Here and throughout this paper, if $\Sigma$ is a set in $\mathbb{R}^3$ or in a $3$-manifold, $|\Sigma|$ denotes the 2-dimensional Hausdorff measure of $\Sigma$.

\begin{proposition} (Catenoid estimate in $\mathbb{R}^3$)
For $r>0$ there exists $h(r)>0$ so that if $h<h(r)$ then
\begin{equation}\label{catenoidestimate}
|U(r,h)|\leq 2\pi r^2+\frac{4\pi h^2}{(-\log h)}.
\end{equation}
\end{proposition}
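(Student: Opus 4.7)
My plan is to compute the area of the unstable catenoid directly and then bound its waist parameter asymptotically. Write the catenoid as the surface of revolution with profile $r(x) = a\cosh(x/a)$ for $x\in[-h,h]$, where $a>0$ is the waist radius. The boundary condition that it meets $C_1(r,h)\cup C_2(r,h)$ becomes $a\cosh(h/a) = r$. Elementary analysis of $a\mapsto a\cosh(h/a)$ shows it attains a minimum at $a_*(h) = h/u_0$, where $u_0$ is the unique positive root of $u\tanh u = 1$; in particular $a_*(h) = O(h)$, independent of $r$. For $h$ small relative to $r$ the equation $a\cosh(h/a)=r$ therefore has two roots $a_1 < a_* < a_2$, with $a_2\approx r$ giving the stable catenoid and $a_1$ (with $a_1\to 0$ as $h\to 0$) giving the unstable one $U(r,h)$.

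Using $1+r'(x)^2 = \cosh^2(x/a)$ together with $\sinh(2h/a) = 2\sinh(h/a)\cosh(h/a)$, the surface-of-revolution formula gives
\begin{equation*}
|U(r,h)| \;=\; 2\pi\int_{-h}^{h}a\cosh^2(x/a)\,dx \;=\; 2\pi a h + \pi a^2\sinh(2h/a).
\end{equation*}
The boundary condition $a\cosh(h/a)=r$ also gives $a\sinh(h/a) = \sqrt{r^2-a^2}$, and substituting collapses the second term to yield
\begin{equation*}
|U(r,h)| \;=\; 2\pi a h + 2\pi r\sqrt{r^2-a^2} \;\le\; 2\pi r^2 + 2\pi a h,
\end{equation*}
where $a=a_1$ is the waist of the unstable branch.

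It remains to show $a\le 2h/(-\log h)$ once $h$ is small. Applying the elementary bound $\cosh u \le e^u$ to the boundary condition gives $r \le a\,e^{h/a}$, whence $h/a \ge \log(r/a)$. On the unstable branch $a < a_*(h) \le h$, so once $h\le r^2$ we also have $a \le r\sqrt{h}$ and therefore $\log(r/a) \ge \tfrac12(-\log h)$. Combining, $h/a \ge \tfrac12(-\log h)$, i.e.\ $a\le 2h/(-\log h)$, and plugging into the previous display produces \eqref{catenoidestimate} for all $h<h(r)$. The only delicate point is this order-of-magnitude control on $a$; the slack between $\cosh u\le e^u$ and the sharper $\cosh u\sim \tfrac12 e^u$ accounts for the factor of $2$ difference between the stated coefficient $4\pi$ and the actual leading behaviour $2\pi h^2/(-\log h)$, but since the inequality is only required up to the logarithmic order this is harmless, and no calculation beyond single-variable calculus is needed.
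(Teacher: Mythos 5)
Your proof is correct. Both you and the paper reduce to the same bound $|U(r,h)| \le 2\pi r^2 + 2\pi a h$ via the explicit surface-of-revolution integral and the boundary condition $a\cosh(h/a)=r$; the two arguments diverge only in how the waist parameter $a$ is controlled. The paper takes logarithms of the boundary condition, invokes the expansion $\log\cosh x = x - \log 2 + g(x)$ with $g(x)\to 0$, rearranges, and passes to a limit to obtain the precise asymptotic $\lim_{h\to 0} a(-\log h)/h = 1$, from which $a \le 2h/(-\log h)$ follows for $h$ small. Your route is more elementary: from $\cosh u \le e^u$ you get $h/a \ge \log(r/a)$; the critical-point analysis of $a\mapsto a\cosh(h/a)$ gives $a < a_* = h/u_0 < h$ directly on the unstable branch (since $u_0 \approx 1.2 > 1$); and then $a < h \le r\sqrt{h}$ for $h\le r^2$ yields $h/a \ge \tfrac12(-\log h)$ with no limit argument or error-term bookkeeping needed. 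This is shorter and self-contained, at the cost of not producing the sharp asymptotic for $a$ (which the paper uses in a remark to improve the constant to $2\pi(1+\delta)$). One small quibble with your closing commentary: the factor-of-two slack in your argument comes chiefly from the crude estimate $a \le r\sqrt{h}$, which halves the lower bound on $\log(r/a)$; the gap between $\cosh u \le e^u$ and $\cosh u \sim \tfrac12 e^u$ contributes only an additive $\log 2$, not a multiplicative factor. This does not affect the correctness of the proof.
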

\begin{remark}
For any $\delta>0$, by taking $h(r)$ even smaller one can replace $4\pi$ on the RHS of \eqref{catenoidestimate} by $2\pi(1+\delta)$.
\end{remark}
\begin{remark}
One can construct by hand a sweepout $\{\Sigma_t\}_{t=0}^r$ of annuli foliating the region between the stable catenoid and two disks by simply cutting out at time $t$ a disk of radius $t$ from each component $S_1(r,h)$ and gluing in a cylinder.  The maximum area of a surface in this sweepout is of order $h^2$ above the area of the two disks.  The point of the catenoid estimate \eqref{catenoidestimate} is that this sweepout is far from efficient - using an optimal sweepout the maximal area is of order $h^2/(-\log h)$ above the area of the two disks.
\end{remark}


\begin{proof}
The unstable catenoid with boundary $C_1(r,h)\cup C_2(r,h)$ is obtained by rotating the function $f(x)=c\cosh(x/c)$ about the $x$ axis.  The constant $c=c(r,h)$ is the smaller solution to
\begin{equation}\label{param}
r=c\cosh(h/c).
\end{equation} 
Using the identities for hyperbolic trigonometric functions, one can obtain a formula for the area of $U(r,h)$:
\begin{align}
|U(r,h)| &=\int_{-h}^{h}2\pi f(x)\sqrt{1+(f'(x)^2)}dx\nonumber\\
&=\int_{-h}^{h}2\pi c\cosh^2(x/c)dx\nonumber\\
&=\pi c^2\sinh(2h/c)+2\pi hc\nonumber\\
&=2\pi c^2 \cosh(h/c)\sinh(h/c)  +2\pi hc\nonumber \\
&=2\pi r\sqrt{r^2-c^2}+2\pi hc \mbox{ , using \eqref{param}}\nonumber\\ 
&\leq 2\pi r^2+2\pi hc.\label{big2}
\end{align}

It remains to determine how $c(r,h)$ depends on $h$ when $h$ is small.  Since $c$ is defined as the smaller solution to \eqref{param}, it follows that 
\begin{equation}\label{c}
\lim_{h\rightarrow 0} h/c=\infty.
\end{equation}
To see \eqref{c}, it is convenient to change variables in \eqref{param} to $x:=h/c$ and $\lambda:=r/h$.  Then \eqref{c} is equivalent to the claim that the larger solution of 
\begin{equation}\label{ref}
\lambda x = \cosh (x),
\end{equation}
tends to infinity as $\lambda\rightarrow\infty$.  At the smaller solution to \eqref{ref} near $0$, the derivative of $\cosh(x)$ is smaller than that of $\lambda x$.  The second larger solution to \eqref{ref} must occur at an $x$ larger than the $x_0$ at which the derivatives of $\lambda x$ and $\cosh (x)$ are equal, i.e., at the $x_0$ satisfying $\lambda = \sinh (x_0)$.  But $x_0$ approaches infinity as $\lambda\rightarrow\infty$.  Thus \eqref{c} is established.

 There exists a function $g(x)$ defined for $x>0$ so that: 
\begin{equation}\label{approx}
 \log(\cosh(x))=x-\log2+g(x) \mbox{ where } \lim_{x\rightarrow\infty}g(x)=0. 
\end{equation}
Taking the logarithm of \eqref{param} and applying \eqref{approx} we obtain
\begin{equation}\label{basic}
\log r - \log c=h/c-\log2+g(h/c).
\end{equation}
Rearranging \eqref{basic} we conclude 
\begin{equation}\label{rearranged}
\log 2r + \log (h/c)-\log h=h/c+g(h/c).
\end{equation}
Dividing \eqref{rearranged} by $h/c$ we obtain,
\begin{equation}\label{final}
\log 2r/(h/c)+ \log (h/c)/(h/c)-c(\log h)/h=1+(c/h)g(h/c).
\end{equation}
Using \eqref{c} we then conclude from \eqref{final}
\begin{equation}
\lim_{h\rightarrow 0} \frac{c(-\log h)}{h}=1, 
\end{equation}
from which we see that if $h$  is small enough, 
\begin{equation}\label{cestimate}
c(h,r)\leq\frac{2h}{(-\log h)}.
\end{equation}  
Plugging \eqref{cestimate} into \eqref{big2} we obtain \eqref{catenoidestimate}.
\end{proof}

\subsection{Catenoid estimate in a $3$-manifold}\label{catenoidinthree}
The catenoid estimate in a $3$-manifold asserts that we can construct a sweepout interpolating between the boundary of a tubular neighborhood about an unstable minimal surface and a graph on the minimal surface where each surface in the sweepout has area less than twice that of the central minimal surface. We will not explicitly need to use catenoids to construct our sweepout; instead we use logarithmically cut off parallel surfaces that turn out to have areas of the correct order predicted by the catenoid estimate.  The catenoid estimate can thus be interpreted as yet another instance of the ``logarithmic cutoff trick."   

One technical difficulty that makes our arguments slightly more involved than those used in the standard log cutoff trick is that we are considering a one-parameter family of normal graphs whose gradients are becoming singular at a point and we need all estimates uniform in this family. 

Let us first introduce the notion of a continuous sweepout that we will use in this paper.   Set $I^n=[0,1]^n$ and let $\{\Sigma_t\}_{t\in I^n}$ be a family of closed subsets of $M$ and $B\subset\partial I^n$.   We call the family $\{\Sigma_t\}$ a \emph{(genus-g) sweepout} if

\begin{enumerate}
\item $\mathcal{H}^2(\Sigma_t)$ is a continuous function of $t\in I^n$,
\item $\Sigma_t$ converges to $\Sigma_{t_0}$ in the Hausdorff topology as $t\rightarrow t_0$.
\item For $t_0\in I^n\setminus B$, $\Sigma_{t_0}$ is a smooth embedded closed surface of genus $g$ and $\Sigma_t$ varies smoothly for $t$ near $t_0$.
\item For $t\in B$, $\Sigma_t$ consists of the union of a $1$-complex together (possibly) with a smooth surface.
\end{enumerate}

Loosely speaking, a sweepout consists of genus $g$ surfaces varying smoothly, that could degenerate to $1$-d graphs or pieces of graphs together with smooth surfaces at the boundary of the parameter space.  

For a closed embedded surface $\Sigma\subset M$, $\phi>0$ a smooth function defined on $\Sigma$ and $\epsilon>0$, set the $\phi$-adapted tubular neighborhood about $\Sigma$ to be:  
\begin{equation}
T_{\epsilon\phi}(\Sigma):=\{\exp_p(s\phi(p)N(p)): s\in [-\epsilon,\epsilon],p\in\Sigma\},
\end{equation}
where $N$ is a choice of unit normal on $\Sigma$.  

Given distinct points $p_1,...,p_k\in\Sigma$ and $1$-d graph $\mathcal{G}\subset\Sigma$, let us say that \emph{$\Sigma\setminus\{p_1,...p_k\}$ retracts onto $\mathcal{G}$} if for any $\epsilon>0$ small enough there exists a smooth deformation retraction $\{R_t\}_{t=0}^1$ of $M\setminus\cup_{i=1}^kB_{\epsilon}(p_i)$ onto $\mathcal{G}$ such that 
\begin{equation}\label{complementofballs}
R_t:M\setminus\cup_{i=1}^kB_{\epsilon}(p_i)\rightarrow M\setminus\cup_{i=1}^kB_{\epsilon}(p_i).
\end{equation}
We further require that $R_t$ is one-to-one for $t\neq 1$.



 For any surface $\Sigma$ of genus $g$ and $p\in\Sigma$, for instance, there always exists a retraction from $\Sigma\setminus\{p\}$ onto a wedge of $2g$ circles.   If $\Sigma$ is a two-sphere and $p_1, p_2\in\Sigma$ are distinct, then there is similarly a retraction from $\Sigma\setminus\{p_1,p_2\}$ onto a closed circle.

With these definitions we can now state the catenoid estimate in a general three-manifold:


\begin{theorem} (Catenoid estimate)\label{catenoidinthreemanifold}
Let $M$ be a $3$-manifold and let $\Sigma$ be a closed orientable unstable embedded minimal surface of genus $g$ in $M$.  Denote by $\phi$ the lowest eigenfunction of the Jacobi operator on $\Sigma$ normalized so that $\|\phi\|_{L^2}=1$ and let $N$ be a choice of unit normal on $\Sigma$. Fix $p_1,p_2...p_k\in\Sigma$ and a graph $\mathcal{G}$ in $\Sigma$ so that $\Sigma\setminus\{p_1,...p_k\}$ retracts onto $\mathcal{G}$.  Then there exists $\epsilon_0>0$ and $\tau>0$ so that whenever $\epsilon\leq\epsilon_0$ there exists a sweepout $\{\Lambda_t\}_{t=0}^1$ of $T_{\epsilon\phi}(\Sigma)$ so that:
\begin{enumerate}
\item $\Lambda_t$ is a smooth surface of genus $2g+k-1$ (i.e. two copies of $\Sigma$ joined by $k$ necks)  for $t\in (0,1)$,
\item $\Lambda_0=\partial (T_{\epsilon\phi}(\Sigma))\cup\bigcup_{i=1}^k\{\exp_{p_i}(s\phi(p_i)N(p_i)): s\in [-\epsilon,\epsilon]\}$
\item $\Lambda_1=\mathcal{G}$
\item For all $t\in [0,1]$, $|\Lambda_t|\leq 2|\Sigma|-\tau\epsilon^2$.
\end{enumerate}  
If $M$ has positive Ricci curvature, we can set in the above $\phi=1$ even though it might not be an eigenfunction.
\end{theorem}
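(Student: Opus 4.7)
The plan is to build the sweepout in two phases. In an \emph{opening phase} $t \in [0, 1/2]$, $k$ tiny catenoid-like necks grow out of the vertical whiskers at the $p_i$ via a logarithmic cutoff; in a \emph{collapse phase} $t \in [1/2, 1]$, the resulting genus-$(2g+k-1)$ surface is deformed onto $\mathcal{G}$ using the given retraction of $\Sigma \setminus \{p_1, \ldots, p_k\}$. The driving mechanism in the opening phase is the familiar log cutoff trick: the quadratic gain $|\lambda_1|\epsilon^2$ from pushing $\Sigma$ off itself in the $\phi N$ direction outweighs the $\epsilon^2/\log(1/\delta)$ cost of excising disks of radius $\delta$ around the $p_i$, which is the catenoid estimate in its sharpest two-dimensional form.

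For a small function $u$ on $\Sigma$, the graph $\Sigma_u = \{\exp_p(u(p)N(p))\}$ has area $|\Sigma| + \tfrac12 Q(u, u) + O(\|u\|_{C^1}^3)$, where $Q(u, u) = \int_\Sigma(|\nabla u|^2 - (|A|^2 + \mathrm{Ric}(N, N))u^2)\, dA$ and the linear term vanishes by minimality. Writing $u = v\phi$ and integrating by parts against $L\phi = \lambda_1\phi$ (with $\lambda_1 < 0$ by instability) yields the key identity
\[
Q(v\phi, v\phi) \;=\; \lambda_1 \int_\Sigma v^2\phi^2\, dA \;+\; \int_\Sigma \phi^2 |\nabla v|^2\, dA;
\]
in the positive Ricci case with $\phi \equiv 1$ the analogue holds with $\lambda_1$ replaced by the negative number $Q(1, 1) = -\int_\Sigma (|A|^2 + \mathrm{Ric}(N,N))\, dA$.

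For the opening phase, fix $\delta_0 > 0$ small and, for $0 < \delta < \delta_0$, let $\chi_\delta : \Sigma \to [0, 1]$ be the standard logarithmic cutoff vanishing on $\bigcup_i B_\delta(p_i)$, equal to $1$ outside $\bigcup_i B_{\delta_0}(p_i)$, and affine in $\log r$ on each annulus, so that $\int_\Sigma \phi^2|\nabla\chi_\delta|^2\, dA \leq C_0/\log(\delta_0/\delta)$ with $C_0$ depending only on $\phi$ and $k$. For $t \in (0, 1/2)$ define $\Lambda_t$ to be the smoothly rounded boundary of the region $\{\exp_p(sN(p)) : p \in \Sigma,\ |s| \leq \epsilon\chi_{\delta(t)}(p)\phi(p)\}$, where $\delta(t)$ increases continuously from $0$ to some $\delta_{\max} \in (0, \delta_0)$. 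Summing the area expansion over the two sheets and applying the identity gives
\[
|\Lambda_t| \leq 2|\Sigma| - |\lambda_1|\epsilon^2 + |\lambda_1|\epsilon^2 \cdot O(\delta_0^2) + \frac{C_0\epsilon^2}{\log(\delta_0/\delta(t))} + O(\epsilon^3).
\]
Fixing $\delta_0$ small, then $\delta_{\max}$ with $C_0/\log(\delta_0/\delta_{\max}) < |\lambda_1|/4$, and finally $\epsilon_0$ small, yields $|\Lambda_t| \leq 2|\Sigma| - \tau\epsilon^2$ uniformly on $[0, 1/2]$ with $\tau = |\lambda_1|/4$.

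At $t = 1/2$ the surface has area at most $2|\Sigma| - 2\pi k \delta_{\max}^2 - \tau\epsilon^2$, a fixed amount below $2|\Sigma|$ independent of $\epsilon$. For $t \in [1/2, 1)$ parametrize by $(\alpha(t), h(t))$ with $\alpha : [1/2, 1) \to [0, 1)$ onto and $h : [1/2, 1) \to (0, \epsilon]$ decreasing to $0$, and take $\Lambda_t$ to be the smoothly rounded boundary of $\{\exp_p(sN(p)) : p \in R_{\alpha(t)}(\Sigma \setminus \bigcup_i B_{\delta_{\max}}(p_i)),\ |s| \leq h(t)\phi(p)\chi_{\delta_{\max}}(p)\}$. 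Because $R_s$ has uniformly bounded Lipschitz constant and the starting area at $t = 1/2$ is a fixed positive distance below $2|\Sigma|$, the bound $|\Lambda_t| \leq 2|\Sigma| - \tau\epsilon^2$ persists for $\epsilon_0$ small; as $t \to 1$ the surface degenerates to $\mathcal{G}$ with area tending to zero, giving $\Lambda_1 = \mathcal{G}$. The main obstacle, flagged at the start of this subsection, is the uniform control of area in the opening phase: because $|\nabla(\epsilon\chi_{\delta(t)}\phi)|$ becomes large on the thin annulus $\{\delta(t) < d(\cdot, p_i) < \delta_0\}$ as $\delta(t) \to 0$, the naive second-order Taylor expansion of the area functional is not directly applicable there and must be replaced by a careful use of the full nonlinear area formula --- this is precisely where the model catenoid computation of the previous subsection plays its role.
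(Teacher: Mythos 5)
Your proposal follows the same overall strategy as the paper's proof: a logarithmic cutoff opens up $k$ necks around the $p_i$ while the instability gives a quadratic gain of order $-|\lambda_1|\epsilon^2$, and once the hole is opened to a fixed radius (dropping area by a definite amount independent of $\epsilon$) the retraction onto $\mathcal{G}$ finishes the sweepout. The identity $Q(v\phi,v\phi) = \lambda_1\int v^2\phi^2 + \int\phi^2|\nabla v|^2$ is a clean way to isolate the spectral gain from the Dirichlet cost of the cutoff; the paper instead splits the integral by region, but the effect is the same.

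There is, however, a genuine gap. Your area expansion $|\Sigma_u| = |\Sigma| + \tfrac12 Q(u,u) + O(\|u\|_{C^1}^3)$ is the naive second-order Taylor expansion, and with $u = \epsilon\chi_{\delta(t)}\phi$ the quantity $\|u\|_{C^1} \sim \epsilon\,\|\nabla\chi_{\delta(t)}\|_{L^\infty} \sim \epsilon/(\delta(t)\log(\delta_0/\delta(t)))$ blows up as $\delta(t)\to 0$, so the error is not $O(\epsilon^3)$ uniformly along the sweepout. You flag exactly this as ``the main obstacle'' and defer to ``a careful use of the full nonlinear area formula'' together with the model $\mathbb{R}^3$ catenoid computation, but this does not close the gap: the paper explicitly states that the $\mathbb{R}^3$ computation is only a motivating heuristic and is not used in the $3$-manifold proof. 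What actually closes the gap is the precise form of the cubic error in the Fermi-coordinate expansion (Proposition~\ref{error}): one has $|\Lambda_{h\phi}| \leq |\Lambda| + \tfrac{h^2}{2}\int(|\nabla\phi|^2 - \phi^2(|A|^2+\mathrm{Ric})) + Ch^3\int_\Lambda(1+|\nabla\phi|^2)$ with $C$ depending only on $\|\phi\|_{L^\infty}$ and the geometry --- crucially \emph{not} on $\|\nabla\phi\|_{L^\infty}$. Since $\int|\nabla(\chi_{\delta(t)}\phi)|^2$ is uniformly bounded by the log cutoff estimate (it is the same integral you already control), the cubic error is $O(h^3)$ uniformly in $t$ and can be absorbed. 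Without this structural fact about the cubic remainder your estimate fails for $\delta(t)$ small. Secondarily, in the collapse phase the assertion that ``$R_s$ has uniformly bounded Lipschitz constant'' addresses the wrong quantity: the normal graph's gradient in image coordinates involves $\nabla(\phi\circ R_s^{-1})$, which can blow up as $s\to 1$; the paper controls this by imposing a quantitative decay rate on $h(s)$, and you should state this explicitly rather than relying only on the fact that the starting area at $t=1/2$ is a definite amount below $2|\Sigma|$.
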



In order to facilitate computations for areas of normal graphs, we will use Fermi coordinates, which are essentially normal coordinates adapted to the tubular neighborhood of a submanifold.  We follow the exposition in Section 4 of \cite{PS}.

\subsection{Fermi coordinates}

Denote by $\Lambda$ a smooth closed embedded (not necessarily minimal) hypersurface in a Riemannian $(n+1)$-manifold $M$. For $z$ small we can consider the parallel hypersurfaces:
\begin{equation}
\Lambda_z=\{\exp_p(zN(p)) : p\in\Lambda\},
\end{equation}
where $N$ is a choice of unit normal vector field on $\Lambda$.
The map
\begin{equation}
F(p,z) := \exp_p(z N(p))
\end{equation}
is a diffeomorphism (for $\epsilon$ small enough) from a neighborhood of $(p,0)\subset\Lambda\times\mathbb{R}$ into a neighborhood of $p$ in $T_{\epsilon}(\Lambda)\subset M$.

Denote by $g_z$ the induced metric on the surface $\Lambda_z$.    By Gauss' lemma, the metric $g$ on $M$ has a product expansion in Fermi coordinates:
\begin{equation}\label{gauss}
g = g_z+dz^2
\end{equation}

Let $\phi$ be a smooth function defined on $\Lambda_0=\Lambda$.   We can then consider surfaces that are normal graphs over $\Lambda$:
\begin{equation}
\Lambda_\phi=\{\exp_p(\phi(p)N(p)) : p\in\Lambda\}
\end{equation}
By \eqref{gauss}, the induced metric on $\Lambda_\phi$ is given by 
\begin{equation}
g_{\Lambda_\phi} = g_\phi + d\phi\otimes d\phi.
\end{equation}

One can then compute the area of the surfaces $\Lambda_\phi$:
\begin{equation}\label{graph}
|\Lambda_\phi| = \int_\Lambda \sqrt{1+ |\nabla^{g_\phi}\phi|^2_{g_\phi}}\text{dv}_{g_\phi}.
\end{equation}

In the remainder of this section, we will apply \eqref{graph} to the function $h\phi$ to obtain an expansion in $h$ (for $h$ small) for the area of the normal graph determined by $h\phi$ in terms of quantities defined on $\Lambda$.  We prove the following:

\begin{proposition}\label{error}
If $\Lambda$ is an open set contained in a minimal hypersurface, and $\phi$ a smooth function defined on $\Lambda$, then there exists $h_0>0$ so that for $h\leq h_0$ we have the expansion
\begin{equation}\label{witherror}
|\Lambda_{h\phi}|\leq |\Lambda|+\frac{h^2}{2}\int_\Lambda (|\nabla\phi|^2-\phi^2(|A|^2+\text{Ric}(N,N)))+Ch^3\int_\Lambda (1+|\nabla\phi|^2), 
\end{equation}
where $h_0$ and $C$ depend only on $\Lambda$ and $\|\phi\|_{L^\infty}$.
\end{proposition}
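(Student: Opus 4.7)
The plan is to start from the area formula \eqref{graph}, substitute $h\phi$ for $\phi$, and expand the integrand in powers of $h$ around the minimal hypersurface $\Lambda$. Writing $J(p,z) = \sqrt{\det g_z / \det g_0}$ for the Jacobian of $p \mapsto \exp_p(z N(p))$ restricted to $\Lambda$, I would rewrite the area as an integral against $dv_{g_0}$:
\[
|\Lambda_{h\phi}| = \int_\Lambda \sqrt{1 + h^2|\nabla^{g_{h\phi}}\phi|^2_{g_{h\phi}}}\; J\bigl(p, h\phi(p)\bigr)\, dv_{g_0}(p).
\]
I would then expand the two factors separately and multiply.

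For the Jacobian, the Riccati equation for the shape operator $S(z)$ of the parallel hypersurface $\Lambda_z$, $S'(z) = S(z)^2 + R(\cdot, N)N$, combined with a second-order Taylor expansion of $\det$, yields
\[
J(p,z) = 1 - zH(p) + \tfrac{z^2}{2}\bigl(H(p)^2 - |A(p)|^2 - \mathrm{Ric}(N,N)\bigr) + O(z^3),
\]
uniformly in $p \in \Lambda$, with the remainder controlled by the ambient curvature and $|A|$. Minimality kills the first-order term, so $J(p, h\phi) \leq 1 - \tfrac{h^2\phi^2}{2}(|A|^2 + \mathrm{Ric}(N,N)) + Ch^3$. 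For the gradient factor, from $g_z = g_0 - 2zA + O(z^2)$ one has $|\nabla^{g_{h\phi}}\phi|^2_{g_{h\phi}} \leq |\nabla\phi|^2(1 + Ch)$, and combining with the one-sided bound $\sqrt{1+x} \leq 1 + x/2$ gives
\[
\sqrt{1 + h^2|\nabla^{g_{h\phi}}\phi|^2_{g_{h\phi}}} \leq 1 + \tfrac{h^2}{2}|\nabla\phi|^2 + Ch^3|\nabla\phi|^2.
\]

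Multiplying the two estimates, collecting the $h^0, h^2$ and $h^3$ contributions, and integrating over $\Lambda$ yields \eqref{witherror}, with the $h^2$ term matching the second variation of area at a minimal hypersurface. The main delicacy is ensuring the remainder takes the stated form $Ch^3(1 + |\nabla\phi|^2)$ rather than something involving $|\nabla\phi|^4$ or higher derivatives of $\phi$. This is achieved by (i) using the one-sided bound $\sqrt{1+x} \leq 1 + x/2$ to avoid an $x^2$ Taylor correction that would generate a spurious $h^4|\nabla\phi|^4$ term, and (ii) observing that the $O(z)$ correction to $|\nabla^{g_z}\phi|^2$ is the bilinear form $2zA(\nabla\phi,\nabla\phi)$, which is linear in $|\nabla\phi|^2$ rather than quadratic. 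The thresholds $h_0$ and $C$ then depend only on $\|\phi\|_{L^\infty}$, needed to keep $h\|\phi\|_{L^\infty}$ inside the Fermi tubular neighborhood, and on geometric data of $\Lambda$ ($|A|$, ambient curvature, injectivity radius of Fermi coordinates).
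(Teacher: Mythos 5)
Your proof is correct and follows essentially the same strategy as the paper: factor the area integrand into the gradient term and the volume element, expand each to second order in $h$ with a one-sided $h^3$ remainder, use minimality to kill the linear term, and finish with $\sqrt{1+x}\le 1+x/2$ to avoid a spurious $|\nabla\phi|^4$ contribution. The only difference is cosmetic: you derive the Jacobian expansion $J(p,z) = 1 - zH + \tfrac{z^2}{2}(H^2-|A|^2-\mathrm{Ric}(N,N)) + O(z^3)$ from the Riccati equation for the shape operator, whereas the paper obtains the same expansion for $\det g_z$ by substituting the Pacard--Sun metric expansion $g_z = g_0 - 2zA + z^2 T + O(z^3)$ into the standard second-order determinant formula; both routes are standard and equivalent, and your observation about the $O(z)$ correction to $|\nabla^{g_z}\phi|^2$ being $2zA(\nabla\phi,\nabla\phi)$ and hence linear in $|\nabla\phi|^2$ is precisely the point the paper is careful about to get a remainder depending only on $\|\phi\|_{L^\infty}$.
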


\begin{remark}
The estimate \eqref{witherror} is sharp.  Indeed, it is well known that if $\Lambda$ is a minimal surface, then from the Taylor expansion of the area functional we always have:
\begin{equation}\label{simple}
|\Lambda_{h\phi}|\leq|\Lambda|-\frac{h^2}{2}\int_\Lambda\phi L\phi+\mathcal{O}(h^3),
\end{equation}
where $L$ denotes the Jacobi operator on a minimal surface,
\begin{equation}\label{jacobi}
L=\Delta + |A|^2+\text{Ric}(N, N).
\end{equation} 

The reason \eqref{simple} by itself is not sufficient for our purposes is that we will be considering families of functions $\phi_t$ whose gradients are blowing up at a point as $t\rightarrow 0$ and we need to ensure that the $\mathcal{O}(h^3)$ term in \eqref{simple} is bounded independently of $t$.  For the family of functions we will consider, the $L^2$ norm of the gradients will be uniformly bounded as well as their $L^\infty$ norms, and thus from \eqref{witherror} we will in fact obtain \eqref{simple}.
\end{remark}

In order to expand \eqref{graph}, we will need the expansion for the metrics $g_z$ on $\Lambda_z$ in terms of $g_0$, the induced metric on $\Lambda_0=\Lambda$.  Such an expansion is derived in Pacard-Sun \cite{PS}:

\begin{lemma} (Proposition 5.1 in \cite{PS})
\begin{equation}\label{expansion}
g_z=g_0-2zA+z^2T + \mathcal{O}(z^3),
\end{equation}
where $A$ denotes the second fundamental form on $\Lambda$ and the tensor $T$ is defined by
\begin{equation}\label{pacard}
T =A\otimes A+g(R(N,.)N,.), 
\end{equation}
where $(A\otimes A)(v_1,v_2)=g_0(\nabla_{v_1} N, \nabla_{v_2} N)$ for $v_1$ and $v_2$ in $T\Lambda$.
\end{lemma}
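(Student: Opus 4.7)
The plan is to compute $g_z(v, w)$ as the inner product of Jacobi fields along the normal geodesics $\gamma_p(z) = \exp_p(zN(p))$ emanating from $\Lambda$. Given $v \in T_p\Lambda$, extend $v$ to a curve $c(t) \subset \Lambda$ with $c(0) = p$ and $c'(0) = v$, and consider the two-parameter variation $\alpha(t, z) = \exp_{c(t)}(zN(c(t)))$. Then $F_\ast v = \partial_t \alpha|_{t=0} =: J_v(z)$, and since each $z$-curve is a geodesic, $J_v$ is a Jacobi field along $\gamma_p$ with initial conditions $J_v(0) = v$ and $J_v'(0) = \nabla_v N$; the latter follows from swapping mixed covariant derivatives $\nabla_z \partial_t\alpha = \nabla_t \partial_z\alpha$ at $z=0$ and using $\partial_z\alpha|_{z=0} = N(c(t))$.

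Next I Taylor expand $J_v$ at $z=0$. The Jacobi equation $J_v'' + R(J_v, \gamma_p')\gamma_p' = 0$ evaluated at $z=0$ gives $J_v''(0) = -R(v, N)N$, so
\begin{equation*}
J_v(z) = v + z\,\nabla_v N - \tfrac{z^2}{2} R(v, N)N + O(z^3),
\end{equation*}
with the remainder controlled uniformly in $p$ and in $v$ of bounded length by standard ODE estimates depending on the supremum of $R$ and $\nabla R$ along short normal geodesics from $\Lambda$. I would then expand $g_z(v, w) = g(J_v(z), J_w(z))$ term by term, using compatibility of $g$ with $\nabla$ and collecting powers of $z$. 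The zeroth-order term is $g_0(v, w)$; the first-order term is $g(\nabla_v N, w) + g(v, \nabla_w N) = -2A(v, w)$ under the sign convention $A(v, w) = -g(\nabla_v N, w)$ for which $A$ is the usual second fundamental form with respect to $N$; and the second-order term is
\begin{equation*}
g(\nabla_v N, \nabla_w N) - \tfrac12 g(R(v, N)N, w) - \tfrac12 g(v, R(w, N)N).
\end{equation*}

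The first piece matches $(A \otimes A)(v, w)$ in \eqref{pacard} on the nose. For the curvature piece, I would use skew-symmetry of $R$ in its first two slots to convert each $R(v, N)N$ into $-R(N, v)N$, and then the pair-swap symmetry $g(R(N, v)N, w) = g(R(N, w)N, v)$ to combine the two halves into the single expression $g(R(N, v)N, w)$. This is precisely the tensor $g(R(N, \cdot)N, \cdot)$ evaluated on $(v, w)$, completing the identification of the $z^2$ coefficient with $T(v, w)$ and establishing \eqref{expansion}. The main obstacle is not substantive but purely bookkeeping: pinning down the sign in front of $A$ against whichever convention for the second fundamental form is in force, and correctly invoking the curvature symmetries to collapse two curvature terms into one. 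Smoothness of $J_v$ jointly in $z$ and the base point $p$, combined with compactness of any fixed patch of $\Lambda$, promotes the pointwise $O(z^3)$ remainder to a uniform tensorial bound, as required for the subsequent area expansion in \eqref{graph}.
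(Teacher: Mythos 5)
Your proposal is correct. Note, though, that the paper does not prove this lemma at all: it is quoted verbatim as Proposition 5.1 of Pacard--Sun \cite{PS}, so there is no internal proof to compare against. Your Jacobi-field derivation is a sound self-contained substitute: writing $F_\ast v$ as the Jacobi field $J_v$ with $J_v(0)=v$, $J_v'(0)=\nabla_v N$, Taylor-expanding along the normal geodesic (with covariant derivatives, i.e.\ after parallel transport, which you implicitly and correctly use when invoking metric compatibility), and collecting the coefficients $g_0(v,w)$, $2g(\nabla_v N,w)=-2A(v,w)$, and $g(\nabla_v N,\nabla_w N)-\tfrac12 g(R(v,N)N,w)-\tfrac12 g(v,R(w,N)N)=g(\nabla_v N,\nabla_w N)+g(R(N,v)N,w)$ via the skew and pair symmetries of $R$ reproduces exactly \eqref{expansion}--\eqref{pacard}. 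The sign conventions you fix ($A(v,w)=-g(\nabla_v N,w)$ and the curvature convention making $\mathrm{tr}\,g(R(N,\cdot)N,\cdot)=-\mathrm{Ric}(N,N)$) are consistent with how the paper later uses the lemma, e.g.\ in the identity $\mathrm{tr}(g^{-1}T)=|A|^2-\mathrm{Ric}(N,N)$ preceding \eqref{det}; and your appeal to smooth dependence of Jacobi fields on the base point plus compactness gives the uniform $\mathcal{O}(z^3)$ remainder needed for the subsequent area expansion. The route in \cite{PS} proceeds instead by expanding the Fermi-coordinate metric coefficients directly (essentially the Riccati/second-variation expansion of $g_z$ in $z$), which is equivalent in content; your version has the advantage of making the curvature term and the uniformity of the error transparent, at the cost of the convention bookkeeping you already flag.
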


\emph{Proof of Proposition \ref{error}:}
We need to expand both the integrand and volume element in \eqref{graph}.  We will first handle the integrand. Recall the Neumann formula for matrix inversion of perturbations: If $\tilde{g}$ is a square matrix with expansion:
\begin{equation}
\tilde{g} = g + \epsilon X + \epsilon^2 Y + \mathcal{O}(\epsilon^3),
\end{equation}
 then the inverse of $\tilde{g}$ can be expressed as:
\begin{equation}\label{forminverse}
\tilde{g}^{-1}= g^{-1}-\epsilon(g^{-1}Xg^{-1})+ \epsilon^2((g^{-1}X)^2g^{-1}-g^{-1}Yg^{-1})+\mathcal{O}(\epsilon^3)
\end{equation}
Using the expansion \eqref{expansion} in \eqref{forminverse} (setting $X=-2A$, $Y=T$ and $\epsilon=h\phi$) we can express the inverse of $g_{h\phi}$ by
\begin{equation}
(g_{h\phi})^{-1}=g^{-1}_0+2h\phi (g^{-1}_0Ag^{-1}_0)+4h^2\phi^2((g^{-1}_0A)^2g^{-1}_0-g^{-1}_0Tg^{-1}_0)+\mathcal{O}(h^3\phi^3).
\end{equation}
Thus we can expand $|\nabla(h\phi)|^2_{g_{h\phi}}=h^2((g_{h\phi})^{-1})^{ij}\phi_i\phi_j$ (where there is summation in $i$ and $j$):
\begin{equation}
|\nabla(h\phi)|^2_{g_{h\phi}}=h^2|\nabla\phi|^2_{g_0}+2h^3\phi A(\nabla\phi, \nabla\phi)+\mathcal{O}(h^4\phi^2|\nabla\phi|_{g_0}^2)
\end{equation}

In other words we obtain
\begin{equation}\label{integrand}
\sqrt{|\nabla(h\phi)|^2_{g_{h\phi}}+1}\leq\sqrt{1+h^2|\nabla\phi|^2_{g_0}(1 + Ch)}.
\end{equation}
where the expansion \eqref{integrand} holds for $h$ sufficiently small (depending only on $\|\phi\|_{L^\infty}$) and where $C$ also depends only on $\|\phi\|_{L^\infty}$ and $\Lambda$.  This gives a bound for the integrand that we need to estimate in \eqref{graph}.  

We must also compute the expansion for the volume element $\text{dv}_{g_{h\phi}}$. Recall that if 
\begin{equation}
\tilde{g} = g + \epsilon X + \epsilon^2 Y+ \mathcal{O}(\epsilon^3),
\end{equation}
then one has the following expansion for $\det(\tilde{g})$:
\begin{equation}\label{detexp}
\det(\tilde{g})=\det(g)(1+\epsilon \text{tr}(g^{-1}X)+\epsilon^2(\text{tr}(g^{-1}Y)+\text{tr}_2 (g^{-1}X))+\mathcal{O}(\epsilon^3),
\end{equation}
where $\text{tr}$ denotes trace, and $\text{tr}_2(M)=\frac{1}{2}((\text{tr}(M))^2-\text{tr}(M^2))$ (i.e. the sum over all products of two different eigenvalues.   

Plugging our expansion \eqref{expansion} into \eqref{detexp}, using also that $\text{tr}(g^{-1} A)=0$ by the minimality of $\Lambda$, that $\text{tr}(g^{-1}T)=|A|^2-\text{Ric}(N, N)$ (by \eqref{pacard}) and that $\text{tr}_2(g^{-1}A)=-\frac12|A|^2$ (again by minimality), we conclude that
\begin{equation}\label{det}
\det{g_{h\phi}}=(\det{g_0})(1-(h\phi)^2(|A|^2+\text{Ric}(N,N))+\mathcal{O}(\phi^3h^3)).
\end{equation}
Substituting \eqref{det} and \eqref{integrand} into the formula for the area of a graph \eqref{graph} in Fermi coordinates, we obtain
\begin{equation}
|\Lambda_{h\phi}|\leq\int_\Lambda\sqrt{1+h^2(|\nabla\phi|^2_{g_0}-\phi^2|A|^2-\phi^2\text{Ric}(N,N))+Ch^3(1+|\nabla\phi|^2_{g_0})}\text{dv}_\Lambda.
\end{equation}
Using the inequality $\sqrt{1+x}\leq 1+x/2$, for $x\geq -1$ to estimate the integrand, we then obtain \eqref{witherror}.\qed


\subsection{Proof of Catenoid Estimate (Theorem \ref{catenoidinthreemanifold})}
In the following, $C$ will denote a constant potentially increasing from line to line but only depending on the geometry of $\Sigma$ and $\|\phi\|_{L^\infty}$.  All balls $B_r(x)$ will be ambient metric balls.
\begin{proof} 
Let us assume that $k=1$ and set $p:=p_1$ so that $\Sigma\setminus\{p\}$ retracts onto the given graph $\mathcal{G}$.  The general case will then follow with trivial modifications.
 
Choose $R>0$ sufficiently small so that $\Sigma\cap B_t(p)$ is a disk for all $t\in (0,R]$ and so that there exists $D>0$ such that for any $t\leq R$ there holds
\begin{equation}\label{mono}
|\Sigma\cap \partial B_t(p)|\leq Dt.
\end{equation}

For any $x\in M$, set $r(x):=\text{dist}_M(x,p)$. Let $-\lambda<0$ be the lowest negative eigenvalue of the Jacobi operator \eqref{jacobi} corresponding to the eigenfunction $\phi$ (so that $L\phi=\lambda\phi$). We define for $t\in [0,R]$, the logarithmically cut-off functions:
\begin{equation*}
    \eta_t(x) = \begin{cases}
               1      & r(x)\geq t\\
               (1/\log(t))(\log t^2-\log r(x))          & t^2\leq r(x)\leq t\\
               0 & r(x)\leq t^2
           \end{cases}
\end{equation*}
Then set $\phi_t(x)=\phi(x)\eta_t(x)$.
For each $t\geq 0$, we consider the parallel surfaces
\begin{equation}
\Lambda'_{h,\pm t}=\{\exp_p(\pm h\phi_t(p)N) : p\in\Sigma\},
\end{equation}
where $N$ is a choice of unit normal vector on $\Sigma$. 
The surfaces that will make up our foliation, $\Lambda_{h,t}$,  (for $0\leq t\leq R$) are then defined to be:
\begin{equation}
\Lambda_{h,t}:=(\Lambda'_{h,t}\setminus B_{t^2})\cup(\Lambda'_{h,-t}\setminus B_{t^2}),
\end{equation}
where by $B_{t^2}$ we mean $B_{t^2}(p)\cap\Sigma$ and where $h$ and $R$ will be fixed later to be suitably small.  Note that the surfaces $\Lambda_{h,t}$ converge (in the varifold sense) to $\partial(T_h\Sigma)$ as $t\rightarrow 0$.  Because $\phi >0$, the surfaces $\Lambda_{h,t}$ are (piecewise smooth) embedded surfaces.

By applying \eqref{witherror} to $\phi_t$ and on the set $\Sigma\setminus B_t$ we obtain
\begin{align}\label{basictaylor}
|\{\exp_p(h\phi_t(p)N(p)) : p\in\Sigma\setminus B_t\}|\leq &  |\Sigma|-|B_t|\nonumber\\
&+\frac{h^2}{2}\int_{\Sigma\setminus B_t} (|\nabla\phi_t|^2-\phi_t^2(|A|^2+\text{Ric}(N,N)))\nonumber\\
&+Ch^3\int_{\Sigma\setminus B_t} (1+|\nabla\phi_t|^2),
\end{align}
where $C$ only depends on $\|\phi\|_{L^\infty}$ and the geometry of $\Sigma$.  For $R$ sufficiently small, since $\phi$ is an eigenfunction for $L$ with $\|\phi\|_{L^2}=1$ and $\phi=\phi_t$ on $\Sigma\setminus B_t$, we have for all $t\in [0,R]$, that
\begin{equation}\label{eigenvalue}
\int_{\Sigma\setminus B_t} (|\nabla\phi_t|^2-\phi_t^2(|A|^2+\text{Ric}(N,N)))\leq-\frac{\lambda}{2}.
\end{equation}
Also since the gradients of $\phi_t$ are uniformly bounded on $\Sigma\setminus B_t$ in $t$, we obtain that the $h^3$ error terms in \eqref{basictaylor} are also bounded independently of $t$.  In total we obtain
\begin{equation}
|\{\exp_p(h\phi_t(p)N(p)) : p\in\Sigma\setminus B_t\}|\leq |\Sigma|-|B_t|-\frac{\lambda}{4}h^2+Ch^3,
\end{equation}
where $C$ depends on $\phi$ (and not $t$).
Then by shrinking $h$ to absorb the $h^3$ term we obtain for $t\in [0,R]$ and for $h$ sufficiently small,
\begin{equation}\label{big}
|\{\exp_p(h\phi_t(p)N(p)) : p\in\Sigma\setminus B_t\}|\leq |\Sigma|-|B_t|-\frac{\lambda}{8}h^2.
\end{equation}

Let us now apply \eqref{witherror} to estimate the area 
\begin{equation}
|\{\exp_p(h\phi_t(p)N(p)) : p\in B_t\setminus B_{t^2}\}|,
\end{equation}
i.e. the part of the normal graph where the logarithmic cutoff function is present and where the gradient terms are unbounded in $t$. By potentially shrinking $h$ to absorb the $h^3$ terms and shrinking $R$ to bound the terms $|\int_{B_t\setminus B_{t^2}}\phi_t^2(|A|^2+\text{Ric}(N,N))|\leq\frac{\lambda}{32}$ uniformly in $t$ we obtain,
\begin{equation}\label{small}
|\{\exp_p(h\phi_t(p)N(p)) : p\in B_t\setminus B_{t^2}\}|\leq |B_t|-|B_{t^2}|+\frac{\lambda}{16}h^2+h^2\int_{B_t\setminus B_{t^2}}|\nabla\phi_t|^2.
\end{equation}

By adding together \eqref{big} with \eqref{small}, we get for $t\in [0,R]$ and $h$ small enough:
\begin{equation}\label{est2}
|\Lambda'_{h,t}|\leq |\Sigma|-\frac{\lambda}{16} h^2 +h^2\int_{B_t\setminus B_t^2} |\nabla\phi_t|^2.
\end{equation}

We can now apply the logarithmic cutoff trick (see for instance \cite{CS}) to estimate 
the gradient term $\int_{B_t\setminus B_t^2} |\nabla\phi_t|^2$ in \eqref{est2}.

By Cauchy-Schwartz we obtain
\begin{equation}\label{cs}
\int_{B_t\setminus B_t^2} |\nabla\phi_t|^2\leq 2\int_{B_t\setminus B_t^2}(\phi^2|\nabla\eta_t|^2+\eta_t^2|\nabla\phi|^2)\leq 2(\sup\phi)^2\int_{B_t\setminus B_t^2}|\nabla\eta_t|^2+2\int_{B_t\setminus B_t^2}|\nabla\phi|^2.
\end{equation}

Note that on $B_t\setminus B_t^2$,
\begin{equation}
|\nabla\eta_t|^2=\frac{1}{(\log t)^2}\frac{|\nabla r|^2}{r^2}.
\end{equation}

Thus by the co-area formula we obtain
\begin{equation}\label{coarea}
\int_{B_t\setminus B_t^2} |\nabla\eta_t|^2=\frac{1}{(\log t)^2}\int_{t^2}^t\frac{1}{\lambda^2}\int_{r=\lambda} |\nabla r|.
\end{equation}
Since $|\nabla r|\leq 1$, the inner integral in \eqref{coarea} can then be estimated using \eqref{mono}
\begin{equation}\label{gradr}
\int_{r=\lambda} |\nabla r|\leq |\Sigma\cap\partial B_\lambda|\leq D\lambda.
\end{equation}

Plugging \eqref{gradr} into \eqref{coarea} we obtain:
\begin{equation}\label{log}
\int_{B_t\setminus B_t^2} |\nabla\eta_t|^2\leq\frac{D}{(\log t)^2}\int_{t^2}^t\frac{1}{\lambda}d\lambda\leq\frac{D}{-\log t}.
\end{equation}

Thus we obtain combining \eqref{cs} and \eqref{log} and setting $A:=2D(\sup\phi)^2$
\begin{equation}\label{cut}
\int_{B_t\setminus B_t^2} |\nabla\phi_t|^2\leq\frac{A}{(-\log t)}+2\int_{B_t\setminus B_t^2} |\nabla\phi|^2.
\end{equation}

By further shrinking of $R$, we can guarantee that for all $t\leq R$, 
\begin{equation}\label{shrunk}
\int_{B_t\setminus B_t^2}|\nabla\phi|^2\leq\int_{B_R}|\nabla\phi|^2\leq\frac{\lambda}{64}.
\end{equation}

Plugging \eqref{cut} and \eqref{shrunk} back into \eqref{est2} we obtain for all $t\leq R$, 
\begin{equation}\label{last}
|\Lambda'_{h,t}|\leq |\Sigma|-\frac{\lambda}{32} h^2+\frac{A}{(-\log t)}h^2.
\end{equation}
Thus adding together contributions from the two components of $\Lambda_{h,t}$ we have 
\begin{equation}\label{areaoffoliation}
|\Lambda_{h,t}|\leq 2|\Sigma|-\frac{\lambda}{16} h^2-2|B_{t^2}| +\frac{A}{(-\log t)}2h^2.
\end{equation}
Note that for $t$ sufficiently small  we see from \eqref{areaoffoliation} that $|\Lambda_{h,t}|<2|\Sigma|$ (since $1/(-\log t)\rightarrow 0$ as $t\rightarrow 0$), which is what we needed.  However, as $t$ increases up to $R$, the two terms in \eqref{areaoffoliation} of order $h^2$ become comparable and thus we need to further shrink $R$ so that 
\begin{equation}
\frac{2A}{-\log R} < \frac{\lambda}{32},
\end{equation} 
to ensure that 
\begin{equation}\label{controlledfoliation}
|\Lambda_{h,t}|\leq 2|\Sigma|-\frac{\lambda}{32} h^2 - 2|B_{t^2}| < 2|\Sigma|-\frac{\lambda}{32} h^2,
\end{equation} 
for all $t\in [0,R]$.  

The parameter $R$ will now be fixed.  From \eqref{controlledfoliation} we also obtain that
\begin{equation}\label{openhole}
|\Lambda_{h,R}|<2|\Sigma|-2|B_{R^2}|.
\end{equation}
The estimate \eqref{openhole} guarantees that ``opening the hole" up to $t=R$ drops area by a definite amount (depending on $R$ and not $h$).

As varifolds, $\Lambda_{h,R}$ converges to $\Sigma\setminus B_{R^2}$ with multiplicity $2$ as $h\rightarrow 0$.  Thus we have
\begin{equation}\label{areaclose}
2|\Sigma\setminus B_{R^2}|\leq |\Lambda_{h,R}|+\epsilon(h),
\end{equation}
where $\epsilon(h)\rightarrow 0$ as $h\rightarrow 0$.
  By assumption $\Sigma\setminus B_{R^2}$ retracts to the graph $\mathcal{G}$ so that all surfaces along the retraction have areas no greater than $\Sigma\setminus B_{R^2}$.   Thus by continuity, the surfaces $\Lambda_{h,R}$ can also be retracted to $\mathcal{G}$.  The area may increase slightly along the way but only by an amount depending on $h$ and which can be made arbitrarily small by shrinking $h$.  In light of \eqref{openhole} (since $2|\Sigma|$ exceeds $|\Lambda_{h,R}|$ by a fixed amount independent of $h$), by potentially shrinking $h$ further, the sweepout $\{\Lambda_{h,t}\}_{t=0}^R$ can be extended to obtain the desired sweepout of $T_{h\phi}(\Sigma)$.  Let us give more details.

For $s\in [0,1]$ define the following surfaces
\begin{align}
\Lambda_{h(s),R,s}&=\{\exp_{R_s(x)}(\pm h(s)\phi_R(x)) : x\in\Sigma\setminus B_{R^2}\}\\
& =\{\exp_{u}(\pm h(s)\phi_R(R_s^{-1}(u)) : u\in R_s(\Sigma\setminus B_{R^2})\}, 
\end{align}
where $h:[0,1]\rightarrow [0,h]$ is a non-negative function satisfying $h(0)=h$ and $h(1)=0$ and decreasing so fast so that for all $s\in [0,1]$ and some $C>0$,
\begin{equation}\label{pickh}
\|h(s)\nabla(\phi_R\circ R_s^{-1})\|_{L^\infty(R_s(\Sigma\setminus B_{R^2}))}\leq Ch\|\nabla\phi_R\|_{L^\infty(\Sigma\setminus B_{R^2})}.
\end{equation}
Combining \eqref{pickh} with \eqref{witherror} we obtain for $s\in [0,1]$
\begin{align}
|\Lambda_{h(s),R,s}| &\leq 2|R_s(\Sigma\setminus B_{R^2})|+C'h^2\|\nabla\phi_R\|^2_{L^\infty(\Sigma\setminus B_{R^2})}+C'h^2\\
&\leq 2|\Sigma\setminus B_{R^2}|+C'h^2\|\nabla\phi_R\|^2_{L^\infty(\Sigma\setminus B_{R^2})}+C'h^2\\\label{dd}
&\leq |\Lambda_{h,R}|+\epsilon(h)+C'h^2\|\nabla\phi_R\|^2_{L^\infty(\Sigma\setminus B_{R^2})}+C'h^2,\
\end{align}
where $C'$ only depends on $\Sigma$ and $\|\phi\|_{L^\infty}$.  In the last line we have used \eqref{areaclose}.  In the second line we used \eqref{complementofballs}.  Shrinking $h$ yet again so that
\begin{equation}\label{smallyet}
\epsilon(h)+C'h^2\|\nabla\phi_R\|^2_{L^\infty(\Sigma\setminus B_{R^2})}+C'h^2\leq 2|B_{R^2}|,
\end{equation}
we obtain by combining \eqref{dd}, \eqref{smallyet}, with \eqref{controlledfoliation} that for all $s\in [0,1]$,
\begin{equation}
|\Lambda_{h(s),R,s}|\leq 2|\Sigma|-\frac{\lambda}{32}h^2.
\end{equation}

Thus we can define the claimed sweepout via concatenation:
\begin{equation*}
    \Lambda_s = \begin{cases}
               \Lambda_{h,2sR}      & 0\leq s\leq 1/2\\
               \Lambda_{h(2s-1),R,2s-1}          & 1/2\leq s\leq 1.
           \end{cases}
\end{equation*}
Item (4) holds with $\tau:=\frac{\lambda}{32}$ and where $\epsilon_0$ is the final shrunken value of $h$.  Items (1), (2) and (3) follow by the construction.

Finally, to verify the last claim, if we assume in addition that $M$ has positive \text{Ric}ci curvature, set $\phi=1$.  While the function $\phi$ may not be an eigenfunction of the Jacobi operator, it still gives a direction to decrease area. Indeed, we have $$\int_\Sigma 1L1 = \int_\Sigma |A|^2+\text{Ric}(N,N)=\gamma.$$ for some $\gamma>0$.  Thus we still obtain \eqref{eigenvalue} with $\lambda=\gamma$.  The rest of the argument then follows verbatim.
\end{proof}

\section{Applications of the catenoid estimate}
In the following we will consider closed $3$-manifolds and various canonical sweepouts arising in different geometric situations.  In each case the main issue is to rule out multiplicity.  The catenoid estimate will enable us to foliate a neighborhood of an unstable minimal surface $\Sigma$ symmetrically about $\Sigma$ so that all areas are strictly less than twice the area of the minimal surface.  Since one can construct sweepouts with all areas below $2\Sigma$, one can avoid $2\Sigma$ (and higher multiples) as a min-max limit.

Let us first introduce the min-max notions we will need in the applications.  Beginning with a genus $g$ sweepout $\{\Sigma_t\}$ (as defined in Section \ref{catenoidinthree}) we need to construct comparison sweepouts which agree with $\{\Sigma_t\}$ on $\partial I^n$.  We call a collection of sweepouts $\Pi$ \emph{saturated} if it satisfies the following condition:  for any map $\psi\in C^\infty(I^n\times M,M)$ such that for all $t\in I^n$, $\psi(t,.)\in \mbox{Diff}_0(M)$ and $\psi(t,.)=id$ if $t\in\partial I^n$, and a sweepout $\{\Lambda_t\}_{t\in I^n}\in\Pi$ we have $\{\psi(t,\Lambda_t)\}_{t\in I^n}\in\Pi$.   Given a sweepout $\{\Sigma_t\}$, denote by $\Pi=\Pi_{\{\Sigma_t\}}$ the smallest saturated collection of sweepouts containing $\{\Sigma_t\}$.  We will call two sweepouts \emph{homotopic} if they are in the same saturated family. 
We define the \emph{width} of $\Pi$ to be
\begin{equation}
W(\Pi,M)=\inf_{\{\Lambda_t\}\in\Pi}\sup_{t\in I^n} |\Lambda_t|.
\end{equation}
A \emph{minimizing sequence} is a sequence of sweepouts $\{\Sigma_t\}^i\in\Pi$ such that 
\begin{equation}
\lim_{i\rightarrow\infty}\sup_{t\in I^n}|\Sigma_t^i|=W(\Pi,M).
\end{equation}
A \emph{min-max sequence} is then a sequence of slices $\Sigma_{t_i}^i$, $t_i\in I^n$ taken from a minimizing sequence so that $|\Sigma_{t_i}^i|\rightarrow W(\Pi,M)$.
The main point of the Min-Max Theory of Simon-Smith \cite{SS} (adapting the more general setting of Almgren-Pitts \cite{P} to smooth sweepouts) is that if the width is greater than the maximum of the areas of the boundary surfaces, then some min-max sequence converges to a minimal surface in $M$:

\begin{theorem} (Min-Max Theorem)\label{minmax}
Given a sweepout $\{\Sigma_t\}_{t\in I^n}$ of genus $g$ surfaces, if
\begin{equation}
W(\Pi,M)> \sup_{t\in\partial I^n} |\Sigma_t|,
\end{equation}
then there exists a min-max sequence $\Sigma_i:=\Sigma_{t_i}^i$ such that
\begin{equation}
\Sigma_i\rightarrow\sum_{i=1}^k n_i\Gamma_i \mbox{     as varifolds,} 
\end{equation}
where $\Gamma_i$ are smooth closed embedded minimal surfaces and $n_i$ are positive integers. Moreover, after performing finitely many compressions on $\Sigma_i$ and discarding some components, each connected component of $\Sigma_i$ is isotopic to one of the $\Gamma_i$ or to a double cover of one of the $\Gamma_i$, implying the following genus bounds:
\begin{equation}\label{genusbound}
\sum_{i\in\mathcal{O}}n_ig(\Gamma_i)+\frac{1}{2}\sum_{i\in\mathcal{N}}n_i(g(\Gamma_i)-1)\leq g.
\end{equation}
Here $\mathcal{O}$ denotes the subcollection of $\Gamma_i$ that are orientable and $\mathcal{N}$ denotes those $\Gamma_i$ that are non-orientable, and where $g(\Gamma_i)$ denotes the genus of $\Gamma_i$ if it is orientable, and the number of crosscaps that one attaches to a sphere to obtain a homeomorphic surface if $\Gamma_i$ is non-orientable.
\end{theorem}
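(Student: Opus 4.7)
The plan is to follow the Simon-Smith smooth version of the Almgren-Pitts min-max construction, supplemented with the genus analysis of De Lellis-Pellandini and the first-named author. First, I would perform the \emph{pull-tight} step: starting from any minimizing sequence, construct a continuous, area non-increasing deformation on the space of varifolds that strictly decreases mass away from the set of stationary integral varifolds, and use it to replace $\{\Sigma_t^i\}$ by a new minimizing sequence in $\Pi$ with the property that every min-max slice $\Sigma_{t_i}^i$ with $|\Sigma_{t_i}^i|\to W(\Pi,M)$ converges subsequentially, as varifolds, to a stationary integral varifold $V$. The hypothesis $W(\Pi,M)>\sup_{t\in\partial I^n}|\Sigma_t|$ is used here to confine the deformation to the interior of $I^n$, so that the deformed family stays in the saturated class $\Pi$.

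Second, from the pulled-tight sequence I would extract a min-max sequence satisfying Pitts' \emph{almost-minimizing} property in small annuli: for every $p\in M$ and for sufficiently small annuli centered at $p$, $\Sigma_{t_i}^i$ is within $\delta_i\to 0$ of being an area-minimizer among isotopic competitors in the annulus, subject to the global area not exceeding $W(\Pi,M)+\delta_i$. The argument is the standard combinatorial one: if almost-minimizing failed at every slice near the maximum, one could cover $M$ by finitely many pairs of annuli and carry out simultaneous area-decreasing isotopies across the whole parameter family, producing a competitor in $\Pi$ with strictly smaller supremal area and contradicting minimality.

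Third, regularity of $V$ follows from the \emph{replacement} construction: inside each annulus where almost-minimizing holds, one replaces $\Sigma_{t_i}^i$ by an area minimizer among isotopic competitors (via Meeks-Simon-Yau), and passes to the varifold limit to obtain a stable minimal surface on which the Schoen-Simon curvature estimates apply in dimension $3$; iterating replacements at overlapping annuli upgrades $V$ to a sum $\sum n_i\Gamma_i$ with each $\Gamma_i$ a smooth closed embedded minimal surface and each $n_i\in\mathbb{Z}_{>0}$.

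Finally, the genus bound \eqref{genusbound} is the most delicate ingredient and I expect it to be the main obstacle. Near each $\Gamma_i$, the almost-minimizing property combined with the curvature estimates forces, for $i$ large, $\Sigma_{t_i}^i$ to decompose into $n_i$ nearly-parallel sheets graphical over $\Gamma_i$ away from finitely many small ``neck" regions, joined by thin tubes. Compressing along the neck disks produces components each of which is, up to isotopy, either a copy of $\Gamma_i$ (when $\Gamma_i$ is orientable) or the orientation double cover $\tilde\Gamma_i$ (when $\Gamma_i$ is non-orientable, since the boundary of a tubular neighborhood of a non-orientable surface is precisely its orientation double cover). Using $\chi(\tilde\Gamma_i)=2\chi(\Gamma_i)$, which translates into $g(\tilde\Gamma_i)=g(\Gamma_i)-1$ in our crosscap convention, Euler-characteristic bookkeeping on the compressed components gives the contribution $n_i g(\Gamma_i)$ in $\mathcal{O}$ and $\tfrac12 n_i(g(\Gamma_i)-1)$ in $\mathcal{N}$, summing to $\le g$ since compressions can only decrease total genus. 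The technical heart here is showing that the necks are localized and that compressions occur along genuinely compressing disks in $M$, which requires the sharp smooth convergence obtained in the regularity step.
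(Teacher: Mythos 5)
Your sketch is essentially the standard Simon--Smith proof together with the genus analysis of De Lellis--Pellandini as sharpened in \cite{Ke1}; the paper itself does not reprove this theorem but cites \cite{SS}, \cite{DP}, \cite{Ke1}, and \cite{CGK} for exactly the pipeline you describe (pull-tight, almost-minimizing in annuli via the combinatorial covering argument, Meeks--Simon--Yau replacements plus Schoen--Simon curvature estimates for regularity, and compression/Euler-characteristic bookkeeping for the genus bound). Your account is a faithful summary of that route, including the correct identification $g(\tilde\Gamma_i)=g(\Gamma_i)-1$ under the stated crosscap convention and the observation that non-orientable components appear with even multiplicity.
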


The existence and regularity for smooth sweepouts were proven by Simon-Smith \cite{SS} (see \cite{CD} for a survey). Some genus bounds were proven by  De Lellis-Pellandini \cite{DP} and improved to the inequality above by the first named author \cite{Ke1}. The details of the multiparameter case of Simon-Smith theory can be found in the appendix of \cite{CGK}.
 
We will also need the following equivariant version of Theorem \ref{minmax} from \cite{Ke2} (as announced by Pitts-Rubinstein \cite{PR1} \cite{PR2}).  Let $G$ be a finite group acting on $M$ so that $M/G$ is an orientable orbifold without boundary (i.e., we exclude reflections).   A set $\Sigma\subset M$ is called $G$-equivariant if $g(\Sigma)=\Sigma$ for all $g\in G$.  Denote by $\mathcal{S}$ the set of points in $M$ where $gx=x$ for some $g$ not equal to the identity in $G$.  For $x\in\mathcal{S}$, the \emph{isotropy subgroup} $G_x$ is the set of all $g$ so that $gx=x$. Suppose $\{\Sigma_t\}_{t=0}^1$ is a one parameter genus $g$ sweepout of $M$ by $G$-equivariant surfaces so that each surface with positive area intersects $\mathcal{S}$ transversally.  Consider the saturation $\Pi^G_{\Sigma_t}$ of the family $\{\Sigma_t\}$ by isotopies through $G$-equivariant surfaces.  Then we have the following
\begin{theorem}(\cite{Ke2})\label{equivariant}
If $W(\Pi^G_{\Sigma_t})>0$, then some min-max sequence converges to a $G$-equivariant minimal surface in $M$.  The genus bound \eqref{genusbound} also holds and furthermore any compression must be $G$-equivariant.   Moreoever, a component of the min-max limit can only contain a segment of $\mathcal{S}$ that has $\mathbb{Z}_2$ isotropy.  In this case, such a component has even multiplicity.
\end{theorem}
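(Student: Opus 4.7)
The plan is to adapt Simon-Smith min-max to the $G$-equivariant setting, following Pitts-Rubinstein \cite{PR1}, \cite{PR2} and Ketover \cite{Ke2}. Three ingredients must be made equivariant: a pull-tight, an almost-minimizing/combinatorial argument, and local regularity. For pull-tight, I would restrict the ambient deformations to those generated by $G$-invariant vector fields; since the $G$-invariant first variation still separates stationary from non-stationary $G$-invariant varifolds, Almgren's deformation lemma produces a minimizing sequence whose varifold limits are stationary and $G$-equivariant. For almost-minimizing, define a $G$-equivariant annulus to be the $G$-orbit of a metric annulus and require the almost-minimizing inequality to hold only against $G$-equivariant isotopies supported in such an orbit. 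The Pitts combinatorial scheme, in the streamlined Colding-De Lellis \cite{CD} form, then yields a min-max subsequence that is $G$-almost minimizing in any sufficiently fine cover by $G$-equivariant annuli.

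The main obstacle is the local regularity of a $G$-almost minimizing, $G$-invariant stationary varifold $V$ near the singular set $\mathcal{S}$. Away from $\mathcal{S}$, a small $G$-invariant neighborhood of a point splits as a disjoint union of $|G|/|G_p|$ isometric balls permuted freely by $G$, and the classical Simon-Smith replacement argument, applied in one ball and copied equivariantly to the rest, gives smoothness. At a point $p \in \mathcal{S}$ with isotropy $G_p \subset SO(3)$, the hypothesis that $M/G$ is an orientable orbifold (no reflections) forces the fixed set of every non-trivial element of $G_p$ to have codimension at least two. I would descend to the quotient orbifold $B_r(p)/G_p$, perform the local replacement there, and invoke removable-singularity and maximum-principle arguments for stable minimal surfaces at the codimension-$\geq 2$ orbifold strata; the resulting replacement lifts back $G$-invariantly and smoothly.

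The subtle case, and where the final two assertions are forced, is a limit component $\Gamma_i$ containing a piece $\gamma \subset \mathcal{S}$. A smooth embedded surface containing a rotation axis $\gamma$ must have tangent plane at each $q \in \gamma$ preserved by the local rotation in $G_q$, which is possible only if the rotation angle is $\pi$; hence $G_q \cong \mathbb{Z}_2$ along any such segment, proving the penultimate claim. For the even multiplicity, note that the approximating $G$-invariant slices $\Sigma_t^i$ meet $\mathcal{S}$ transversally and therefore do \emph{not} themselves contain $\gamma$. Near $\gamma$, the local $\pi$-rotation in $G_q$ fixes $\gamma$ pointwise and rotates the normal plane; applied to a sheet of $\Sigma_t^i$ through a transverse intersection with $\gamma$, invariance of the tangent plane would force the sheet to contain $\gamma$, which it does not. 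Hence the rotation pairs the sheets of $\Sigma_t^i$ near $\gamma$ without fixed sheets, and passage to the varifold limit shows that each pair contributes $2$ to the density of $\Gamma_i$, so the multiplicity of $\Gamma_i$ is even.

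Finally, the genus bound \eqref{genusbound} and the $G$-equivariant compression statement follow from the surgery arguments of De Lellis-Pellandini \cite{DP} and Ketover \cite{Ke1}, with the modification that any compressing disk is replaced by its $G$-orbit and all $|G|/|G_p|$ disks of the orbit are surgered simultaneously. Since the original disks are produced inside small $G$-invariant neighborhoods, the orbit disks are disjoint and the surgeries preserve $G$-equivariance and respect the topological bookkeeping that yields \eqref{genusbound}, with non-orientable components $\Gamma_i$ accounted for via their orientation double covers as in the statement of Theorem \ref{minmax}.
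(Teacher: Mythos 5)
This theorem is quoted in the paper as an external result from Ketover's \cite{Ke2} (``Equivariant min-max theory''); the present paper contains no proof of it, so there is nothing in the source to compare against line by line. I will therefore assess your sketch on its own merits.

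Your outline follows the expected structure of a Pitts--Rubinstein/Ketover equivariant min-max: an equivariant pull-tight using $G$-invariant vector fields, a combinatorial almost-minimizing argument run against $G$-orbits of annuli, local replacements performed either in a free orbit of balls or on the quotient orbifold at strata of $\mathcal{S}$, and then the equivariant surgery bookkeeping for \eqref{genusbound}. The argument that a smooth $G$-invariant limit component $\Gamma_i$ containing a segment $\gamma\subset\mathcal{S}$ forces $G_q\cong\mathbb{Z}_2$ is correct: $Dg_q$ is an orientation-preserving rotation fixing $T_q\gamma$ and must preserve the plane $T_q\Gamma_i\supset T_q\gamma$, and a rotation about a line preserves a plane through that line only if the angle is $\pi$.

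There is, however, a genuine gap in your even-multiplicity step. You argue that if a sheet $S$ of $\Sigma^i_t$ met $\gamma$ transversally at $q'$ and satisfied $g(S)=S$, then $Dg_{q'}$-invariance of $T_{q'}S$ would force $S\supset\gamma$, which is false. That implication does not hold: a $\pi$-rotation about the axis $T_{q'}\gamma$ preserves not only planes containing the axis but also the plane orthogonal to it, so a $g$-invariant sheet could perfectly well cross $\gamma$ transversally with tangent plane near the orthogonal direction. To close the gap one must use that the sheets of $\Sigma^i_t$ which carry mass onto $\Gamma_i$ near $q$ are, by the replacement/regularity theory, normal graphs over $\Gamma_i$ for $i$ large, hence their tangent planes along $\gamma$ are close to $T\Gamma_i\supset T\gamma$ and in particular \emph{not} close to the orthogonal plane; only then can one conclude that $g$ acts freely on these sheets and pairs them, giving even density on $\Gamma_i$. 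As stated, your argument proves less than you assert and needs this graphical-convergence input (or an equivalent local structure statement) to be made rigorous.
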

\subsection{Min-max minimal surfaces arising from Heegaard splittings}
Let $M$ be a closed orientable $3$-manifold.  Recall that a \emph{Heegaard surface} is an orientable closed embedded surface $\Sigma\subset M$ so that $M\setminus\Sigma$ consists of two open handlebodies.  The \emph{Heegaard genus} of $M$ is the smalllest genus $g$ realized by a Heegaard splitting.  A \emph{one-sided Heegaard surface} is an embedded non-orientable surface $\Sigma\subset M$ so that $M\setminus\Sigma$ consists of a single handlebody. 

If $M$ has positive Ricci curvature and does not admit non-orientable surfaces, then \cite{MN} prove (Theorem 3.4) that a surface realizing the Heegaard genus can be isotoped to be minimal and have index $1$.  If $M$ contains embedded non-orientable surfaces, however, one could not rule out that the min-max sequence converges with multiplicity two to a one-sided Heegaard splitting surface. Using the catenoid estimate, we can rule out this possibility and thus obtain:
\begin{theorem}\label{bounds}
Let $M$ be a closed  $3$-manifold with positive Ricci curvature and $\Gamma$ a Heegaard surface realizing the Heegaard genus of $M$.  Then $\Gamma$ is isotopic to an index $1$ minimal surface $\Sigma$.   Moreover, $\Sigma$ realizes the min-max width obtained from considering saturations by Heegaard sweepouts relative to $\Gamma$.
\end{theorem}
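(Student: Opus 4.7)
The plan is to apply the Simon-Smith min-max theorem (Theorem~\ref{minmax}) to a Heegaard sweepout $\{\Sigma_t\}_{t\in [0,1]}$ through $\Gamma$, with saturation $\Pi$. Since the boundary slices are $1$-complex spines of the two handlebodies bounded by $\Gamma$, one has $\sup_{t\in\partial I}|\Sigma_t|=0<|\Gamma|\leq W(\Pi,M)$, so Theorem~\ref{minmax} produces a min-max sequence converging as varifolds to a minimal limit $\sum n_i\Gamma_i$. Positive Ricci curvature together with Frankel's theorem and the strong maximum principle force the support of this limit to be a single connected minimal surface, so in fact the limit has the form $n\Gamma_\infty$ with $W(\Pi,M)=n|\Gamma_\infty|$. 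The goal is to show $n=1$ and $\Gamma_\infty$ orientable; once established, the conclusion (isotopy to $\Gamma$ and index one) follows as in \cite{MN}, Theorem~3.4.

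The crux is therefore to rule out $n\geq 2$. Supposing for contradiction $n\geq 2$, one has $W\geq 2|\Gamma_\infty|$, and the strategy is to construct an element of $\Pi$ whose maximum area is strictly less than $2|\Gamma_\infty|$, contradicting the definition of the width. To do so, apply Theorem~\ref{catenoidinthreemanifold} to $\Gamma_\infty$ — using the positive Ricci case with $\phi\equiv 1$ — with marked points $p_1,\ldots,p_k\in\Gamma_\infty$ and a graph $\mathcal{G}\subset\Gamma_\infty$ chosen so that the ``doubled with $k$ necks'' surface appearing in item~(2) is an orientable genus-$g(M)$ surface isotopic to $\Gamma$. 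This yields a family $\{\Lambda_t\}$ foliating $T_\epsilon(\Gamma_\infty)$ with all areas bounded by $2|\Gamma_\infty|-\tau\epsilon^2$, terminating at the $1$-complex $\mathcal{G}$. The family $\{\Lambda_t\}$ is then extended across the complement $M\setminus T_\epsilon(\Gamma_\infty)$, which deformation retracts onto a $1$-complex spine; since this extension adds only area of order $\epsilon$, for $\epsilon$ sufficiently small the full sweepout lies in $\Pi$ with maximum area strictly below $2|\Gamma_\infty|\leq W$.

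The main obstacle — and the reason the catenoid estimate is required rather than a purely topological argument — is the non-orientable case: if $\Gamma_\infty$ is non-orientable, orientability of the Heegaard sweepout forces even multiplicity, so $n\geq 2$ cannot be excluded by the genus bound \eqref{genusbound} alone. Theorem~\ref{catenoidinthreemanifold} as stated requires orientability of the central surface, but its proof carries over to non-orientable $\Gamma_\infty$: the parallel surfaces inside the twisted line bundle $T_\epsilon(\Gamma_\infty)$ are connected orientable double covers of $\Gamma_\infty$, and the logarithmic cutoff trick performed between the two preimages of a point $p\in\Gamma_\infty$ yields the same area estimate $2|\Gamma_\infty|-\tau\epsilon^2$, with positive Ricci again supplying the area-decrease direction through the Jacobi operator with $\phi\equiv 1$. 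A secondary technical point — that the constructed competitor genuinely lies in the saturation $\Pi$ — is handled by noting that $\partial T_\epsilon(\Gamma_\infty)$ with $k$ suitably-placed necks is an orientable Heegaard surface of genus $g(M)$, which by Waldhausen's uniqueness theorem for minimal-genus Heegaard splittings is isotopic to $\Gamma$.
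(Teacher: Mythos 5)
Your overall strategy is the same as the paper's: rule out multiplicity by building a comparison sweepout in $\Pi$ whose slices all have area strictly below $2|\Gamma_\infty|$, using the catenoid estimate to handle the neighborhood of $\Gamma_\infty$. Your preference for staying in $M$ and working with the twisted $I$-bundle directly, rather than passing to the orientable double cover $\tilde{M}$ and constructing a $\tau$-equivariant sweepout of $\tilde{\Sigma}\subset\tilde M$ as the paper does, is a legitimate repackaging of the same construction (the two preimages of $p\in\Gamma_\infty$ downstairs correspond to the $\tau$-orbit $\{p,\tau(p)\}$ upstairs), and with more care it could be made to work. However, your treatment of what you call the ``secondary technical point'' contains a genuine gap.

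The step that the competitor sweepout really lies in the saturation $\Pi$ --- equivalently, that $\partial T_\epsilon(\Gamma_\infty)$ with a vertical handle attached is \emph{isotopic} to the original Heegaard surface $\Gamma$ --- cannot be deduced from ``Waldhausen's uniqueness theorem for minimal-genus Heegaard splittings.'' Waldhausen's theorem concerns Heegaard splittings of $S^3$ only; for general closed $3$-manifolds, minimal-genus Heegaard splittings need not be unique up to isotopy. Moreover, you have no a priori control on the non-orientable genus of $\Gamma_\infty$, so you cannot even assert that your competitor has genus $g(M)$ before establishing the isotopy. The paper closes precisely this gap with a topological argument of independent substance: it invokes Casson--Gordon \cite{CG} (positive Ricci curvature excludes incompressible two-sided minimal surfaces, so the minimal-genus $\Gamma$ is strongly irreducible), the surgery structure of the min-max sequence from \cite{Ke1} (Theorem~1.9: after compressions the min-max surface is isotopic to $\cup_{i}\partial T_{\epsilon_i}(\Sigma)$), and then Heath \cite{H} to conclude that $\Gamma$ is obtained from a single $\partial T_\epsilon(\Sigma)$ by attaching one vertical handle. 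Without this chain of reasoning, there is no justification that your low-area sweepout competes with the original one, and the proof does not close.

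A smaller remark: the theorem with the additional hypothesis that $M$ contains no non-orientable minimal surfaces is already Theorem~3.4 of \cite{MN}; the new content is exactly the non-orientable case, so the opening reductions (connectedness via Frankel, multiplicity-one implying the conclusion, etc.)\ should cite that result rather than be re-derived.
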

 
\begin{remark}  As a simple example of Theorem \ref{bounds}, consider $\mathbb{RP}^3$ with its round metric.  The projection of the Clifford torus in $\mathbb{S}^3$ is a minimal torus of Morse index $1$ and area $\pi^2$ (by Theorem 3 in \cite{DRR} it is in fact the unique embedded index $1$ minimal surface).  The area of $\mathbb{RP}^2\subset\mathbb{RP}^3$ is $2\pi$.   The projected Clifford torus has smaller area than twice the area of the projective plane and when one runs a min-max procedure using Heegaard tori as sweepouts, one obtains the Heegaard torus and not the projective plane with multiplicity two.  
\end{remark}

\begin{proof}
Assume without loss of generality that $M$ is not diffeomorphic to the three-sphere (as this case is handled in Theorem 3.4 in \cite{MN}).   Denote by $\Sigma$ the support of the min-max limit arising from sweepouts in the saturation of a Heegaard foliation of $M$ by surfaces isotopic to $\Gamma$.  By Frankel \cite{F}, $\Sigma$ is connected. Let us assume toward a contradiction that $\Sigma$ is a non-orientable minimal surface.  We will construct a Heegaard sweepout of $M$ by surfaces isotopic to the original Heegaard surface with all areas strictly less than $2|\Sigma|$, contradicting the definition of width.

The following claim is a consequence of the surgery process of \cite{Ke1} together with  topological arguments that use   the fact that $\Gamma$ is strongly irreducible.  This is based on Lemma 1.6 in \cite{S}.
\\
\\ \noindent
\emph{claim: For $\epsilon$ small enough, $\Gamma$ is isotopic to $\partial T_\epsilon(\Sigma)$ with a verticle handle attached.}  \\ 
\\ \indent
If  $\Sigma=\mathbb{RP}^2$, then by Frankel's theorem \cite{F}, $M=\mathbb{RP}^3$.  But the unique genus $1$ Heegaard splitting of $\mathbb{RP}^3$ is obtained by attaching a verticle handle to $\partial T_{\epsilon}(\Sigma)$, so the claim is established in this case.

Since $M$ has positive Ricci curvature, it contains no incompressible two-sided minimal surfaces.  Thus by Casson-Gordon \cite{CG}, since $\Gamma$ is a lowest genus Heegaard surface, it must be a strongly irreducible Heegaard splitting.  From Theorem 1.9 in \cite{Ke1} we know that after surgeries the min-max sequence is isotopic to  $\cup_{i=1}^k\partial T_{\epsilon_i}(\Sigma)$ for some increasing set of numbers $\epsilon_1, ...,\epsilon_k$.  Because $\Gamma$ is strongly irreducible, surgeries along essential curves have to be performed in the same side of $\Gamma$. Surgeries along non-essential curves can occur on both sides and split off spheres. 

If $\Sigma\neq \mathbb{RP}^2$, then no $\partial T_{\epsilon_i}(\Sigma)$, $i=1,\ldots,k$ is a sphere and so they had to be obtained from surgeries performed  in the same side of $\Gamma$, which means they all bound handlebodies with disjoint supports. Thus $k>1$ forces the handlebody of  $\partial T_{\epsilon_1}(\Sigma)$ to contain $\Sigma$, which is impossible. Thus $k=1$.

By irreducibility,  $\Gamma$ is obtained from $\partial T_{\epsilon_1}(\Sigma)$ by adding a single verticle handle through $\Sigma$ (see \cite{H}).

 Thus the claim is established.  
 
 We now will construct a Heegaard sweepout of $M$ by surfaces isotopic to $\Gamma$
with all areas less than $2|\Sigma|$.

There is a double cover of $M$, $\tilde{M}$ (also with positive \text{Ric}ci curvature) so that the projection map $\pi:\tilde{M}\rightarrow M$ is a local isometry and so that $\tilde{\Sigma} := \pi^{-1}(\Sigma)$ is an orientable Heegaard surface in $\tilde{M}$. Moreoever, $M=\tilde{M}/\{1,\tau\}$, where $\tau:\tilde{M}\rightarrow\tilde{M}$ is an involution switching the two handlebodies determined by $\tilde{\Sigma}$. Since $\tilde{M}$ has positive Ricci curvature, $\tilde{\Sigma}$ is also unstable.  Hence by \cite{MN} (Lemma 3.5), we can find an optimal sweepout $\{\Sigma_t\}_{t=0}^{1/2}$ of one of the handlebodies $H_1$ in $\tilde{M}$ bounded by $\tilde{\Sigma}$ by surfaces isotopic to $\tilde{\Sigma}$ in the sense that:
\begin{enumerate}
\item $|\Sigma_t| < |\tilde{\Sigma}|$ for all $t\in [0,1/2]$,
\item for $t$ near $1/2$, $\Sigma_t$ coincides with $\{\exp_p((1/2-t)N(p)):p\in\tilde{\Sigma}\}$ (i.e. parallel surfaces to $\tilde{\Sigma}$, where $N$ is the unit normal on $\tilde{\Sigma}$ pointing into $H_1$.
\end{enumerate}

Fix a point $p\in\tilde{\Sigma}$ and also consider image $q:=\tau(p)\in\tilde{\Sigma}$.  For each $t\in [0,1/2]$ let us choose two disinct points $p_t$, $q_t$ in $H_1$ (varying smoothly in $t$) so that 
\begin{enumerate}
\item $p_t$ and $q_t$ are both contained in $\Sigma_t$ for $t\in [0,1/2]$
\item for $t$ close to $1/2$, $p_t =\exp_p((1/2-t)N(p))$ 
\item for $t$ close to $1/2$, $q_t = \exp_q ((1/2-t)N(q))$
\end{enumerate}

Also for $t\in [0,1/2]$, choose arcs $\alpha_t$ and $\beta_t$ in $H_1$ varying smoothly with $t$ so that
\begin{enumerate}
\item  $\alpha_t$ begins at $p_t$ and ends at $p$ and $\beta_t$ similarly joins $q_t$ to $q$.
\item  $\alpha_t\cap\Sigma_t= p_t$ and $\beta_t\cap\Sigma_t= q_t$, 
\item  for $t$ close to $1/2$, $\alpha_t$ (resp. $\beta_t$) consists of  the normal geodesic to $\tilde{\Sigma}$ from $p$ (resp. $q$) to
$\Sigma_t$,
\item  for $t=1/2$, $\alpha_t= p$ and $\beta_t=q$. 
\end{enumerate}

For any $\epsilon>0$, let us set $D^1_{t,\epsilon} := T_\epsilon(\alpha_t)\cap\Sigma_t$ and $D^2_{t,\epsilon} := T_\epsilon(\beta_t)\cap\Sigma_t$.  There exists $\epsilon_0$ so that whenever $\epsilon<\epsilon_0$ we have that for all $t$, $D^1_{t,\epsilon}$ and $D^2_{t,\epsilon}$ are both disks.

Finally we can make a new sweepout $\{\Gamma_t\}_{t=0}^{1/2}$ by gluing in tubes and removing the appropriate disks:
\begin{equation}
\Gamma_t=\Sigma_t\cup \partial(T_{\eta(t)}(\alpha_t))\cup T_{\eta(t)}(\beta_t)\setminus(D^1_{t,\eta(t)}\cup D^2_{t,\eta(t)}),
\end{equation}
\noindent
where $\eta(t):[0,1/2]\rightarrow [0,\epsilon_0]$ and satisfies

\begin{enumerate}
\item $\eta(0)=0$, 
\item $\eta(t)>0$ for $t\in (0,1/2-\delta)$ 
\item $\eta(t)=0$ for $t\in [1/2-\delta,1/2]$.  
\end{enumerate}

The parameter $\delta$ will be chosen later.  Then we can consider the sweepout for $t\in [0,1/2]$ given by 
\begin{equation}
\tilde{\Gamma}_t = \Gamma_t\cup\tau(\Gamma_t).
\end{equation}

We can now apply the Catenoid Estimate (Theorem \ref{catenoidinthreemanifold}) to replace the sweepout $\{\tilde{\Gamma}\}_{t=0}^{1/2}$ since it coincides with parallel surfaces to $\tilde{\Sigma}$ for $t$ near $1/2$.   To that end we set $p_1=p$ and $p_2=q$ and let $\mathcal{G}$ be a graph onto which $\tilde{\Sigma}\setminus(p\cup q)$ retracts $\tau$-equivariantly.  We can then choose $\delta$ to be smaller than the $\epsilon_0$ provided by the Catenoid Estimate. 

In this way we obtain a new $\tau$-equivariant sweepout $\Gamma'_t$ of $\tilde{M}$ so that $|\Gamma'_t| < 2|\tilde{\Sigma}|$ for all $t\in [0,1/2]$).  The surfaces $\Gamma'_t$ projected down to $M$ then give rise to a Heegaard foliation of $M$ (isotopic to $\Gamma$) with all areas strictly less than $2|\Sigma|$.   This contradicts the definition of min-max width.  Thus $\Sigma$ cannot be non-orientable.

It remains to rule out that other degeneration has occurred, i.e., that $g(\Sigma) < g(\Gamma)$.  In this case, however, $\Sigma$ would be a Heegaard splitting of smaller genus than $\Gamma$, contradicting the assumption that $\Gamma$ realized the Heegaard genus of $M$.  Thus $\Sigma$ and $\Gamma$ are isotopic.  The index bound follows from Lemma 3.5 in \cite{MN}.

\end{proof}

Adapting these ideas to the setting of Almgren-Pitts, we obtain:

\begin{theorem}
The Almgren-Pitts width (with $\mathbb{Z}$ or $\mathbb{Z}_2$ coefficients) of an orientable $3$-manifold $M$ with positive \text{Ric}ci curvature is achieved by an index $1$ orientable minimal surface.
\end{theorem}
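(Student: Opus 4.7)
\textit{Proof proposal.} The plan is to combine Zhou's dichotomy with the $\tau$-equivariant catenoid construction already carried out in the proof of Theorem \ref{bounds}. By \cite{Z}, the Almgren--Pitts $1$-width $W$ of $M$ (with either $\mathbb{Z}$ or $\mathbb{Z}_2$ coefficients) is realized either by an orientable, multiplicity-one minimal surface---in which case the index-one bound follows from the standard Morse-index upper bound in one-parameter min-max---or by twice a non-orientable embedded minimal surface $\Sigma$. It suffices to rule out the latter alternative by exhibiting an admissible sweepout of $M$ whose maximum mass is strictly less than $2|\Sigma|$.

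Assume, toward contradiction, that $W=2|\Sigma|$ with $\Sigma$ non-orientable, and let $\pi\colon\tilde M\to M$ be the orientation double cover in which $\Sigma$ lifts to a connected orientable two-sided minimal surface $\tilde\Sigma$ (with deck involution $\tau$ and $|\tilde\Sigma|=2|\Sigma|$). Positive Ricci is inherited, so $\tilde\Sigma$ is unstable in $\tilde M$. I then apply essentially verbatim the construction in the proof of Theorem \ref{bounds}: foliate one handlebody $H_1\subset\tilde M\setminus\tilde\Sigma$ by surfaces of area strictly less than $|\tilde\Sigma|$ via Lemma 3.5 of \cite{MN}, reflect by $\tau$ to foliate $H_2=\tau(H_1)$, and interpolate through a tubular neighborhood of $\tilde\Sigma$ using the Catenoid Estimate (Theorem \ref{catenoidinthreemanifold}) with the $\tau$-equivariant data $p,\ q=\tau(p)\in\tilde\Sigma$ and a $\tau$-invariant retraction graph $\mathcal G$ (replacing the lowest Jacobi eigenfunction $\phi$ by $(\phi+\phi\circ\tau)/2$, or simply using $\phi\equiv 1$ as permitted by positive Ricci). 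This produces a $\tau$-equivariant one-parameter family $\{\tilde\Gamma_t\}_{t\in[0,1]}$ of smooth embedded $2$-cycles in $\tilde M$ with $|\tilde\Gamma_t|<2|\tilde\Sigma|=4|\Sigma|$ and with endpoints $1$-complexes of zero area.

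To descend to $M$, I write $\tilde\Gamma_t=\partial\tilde U_t$ for a monotone family of $\tau$-invariant open sets $\tilde U_t\subset\tilde M$ with $\tilde U_0=\emptyset$ and $\tilde U_1=\tilde M$, and set $\Phi(t):=\partial\pi(\tilde U_t)$. Since $\tau$ acts freely and $\tilde U_t$ is $\tau$-invariant, $\pi$ restricts to a $2$-to-$1$ local isometry from $\tilde\Gamma_t$ onto $\Phi(t)$, so $|\Phi(t)|=\tfrac12|\tilde\Gamma_t|<|\tilde\Sigma|=2|\Sigma|$. The family $\pi(\tilde U_t)$ is a monotone sweep of $M$ from $\emptyset$ to $M$, so $\Phi$ is an admissible one-parameter sweepout detecting the generator of $\pi_1(\mathcal Z_2(M;\mathbb{Z}_2))$ (or of $\pi_1(\mathcal Z_2(M;\mathbb{Z}))$, using $\tau$-compatible orientations). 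Hence $\sup_t|\Phi(t)|<2|\Sigma|=W$, contradicting the definition of $W$.

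The main obstacle is this last step: converting the smooth $\tau$-equivariant family $\{\tilde\Gamma_t\}$ upstairs into a bona-fide Almgren--Pitts discrete sweepout downstairs in the prescribed $\pi_1$-class, while verifying that the mass drop $|\Phi(t)|=\tfrac12|\tilde\Gamma_t|$ and the no-concentration-of-mass condition survive the descent. The slab formula $\Phi(t)=\partial\pi(\tilde U_t)$ settles the homotopy-class question cleanly because $\pi$ preserves monotonicity of the sweeping open sets, but the technical bookkeeping (fineness, flat continuity, discretization) is the standard smooth-to-discrete translation in Almgren--Pitts theory (cf.\ \cite{MN}, \cite{Z}). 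Every other ingredient has already been assembled in the proof of Theorem \ref{bounds}.
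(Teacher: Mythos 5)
Your proposal is correct and follows essentially the same route as the paper: invoke Zhou's dichotomy, pass to the orientation double cover where $\tilde\Sigma=\pi^{-1}(\Sigma)$ becomes an unstable orientable Heegaard surface, build the $\tau$-equivariant sweepout of $\tilde M$ from the Marques--Neves handlebody foliation spliced with the catenoid-estimate foliation of a tubular neighborhood of $\tilde\Sigma$, and descend to $M$ by halving the mass. Your explicit descent via $\Phi(t)=\partial\pi(\tilde U_t)$ and the remark on choosing a $\tau$-invariant Jacobi eigenfunction (or $\phi\equiv1$) just make precise the bookkeeping the paper handles by citing the $\mathbf{F}$-metric admissibility criterion from \cite{MN4}.
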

\begin{proof}
If the theorem failed, then by \cite{Z} Theorem 1.1 (see also \cite{MR}) the width of $M$ must be realized by a non-orientable minimal surface $\Sigma$ with multiplicity two.  Again we can lift $\Sigma$ to $\tilde{\Sigma}$ in a double cover of $M$, $\tilde{M}$, so that $M=\tilde{M}/\{1,\tau\}$.  Since $\tilde{M}$ has positive Ricci curvature, $\tilde{\Sigma}$ is an unstable Heegaard splitting.  Thus again by Lemma 3.4 in \cite{MN} we can extend $\tilde{\Sigma}$ to a sweepout $\{\tilde{\Sigma}_t\}_0^1$ of $\tilde{M}$ where $\tilde{\Sigma}=\tilde{\Sigma}_{1/2}$ and with $|\tilde{\Sigma}_{t}|< 2|{\Sigma}|$ for all $t\neq 1/2$.  Moreoever, for $t$ close to $1/2$ the surfaces $\{\tilde{\Sigma}_t\}$ are a foliation of a neighborhood  $T_\epsilon(\tilde{\Sigma})$ of  $\tilde{\Sigma}$ by parallel hypersurfaces (for suitably small $\epsilon$).  For $\epsilon$ small enough,  we can then apply the catenoid estimate to $T_\epsilon(\tilde{\Sigma})$ where necks are added at some $p\in\tilde{\Sigma}$ and $\tau(p)\in\tilde{\Sigma}$.  We thus obtain from $\{\tilde{\Sigma}_t\}_0^1$ a $\tau$-equivariant sweepout of $\tilde{M}$ continuous in the flat topology with all areas strictly less than $2\Sigma$.   Such a family is continuous in the $\mathbf{F}$-metric and thus admissible in the sense of Almgren-Pitts (see \cite{MN4}).   The projected sweepout to $M$ consist of surfaces with all areas strictly below $2|\Sigma|$, contradicting the assumption that $2\Sigma$ realized the width of $M$.
\end{proof}

\subsection{Doubling constructions}
We will give a min-max construction of the doubled Clifford torus due to Kapouleas-Yang \cite{KY}.  

We first identify $\mathbb{S}^3$ with a subset of $\mathbb{C}^2\equiv\mathbb{R}^4$:
$$\mathbb{S}^3=\{(z,w)\in\mathbb{C}^2\;|\;|z|^2+|w|^2=1\}.$$
Consider the group $H_m=\mathbb{Z}_{m}\times\mathbb{Z}_{m}$ acting on $\mathbb{S}^3$ as follows.   For any $([k],[l])\in \mathbb{Z}_{m}\times\mathbb{Z}_{m}$ and $(z,w)\in\mathbb{S}^3$, we define the action to be 
\begin{equation}
([k],[l]).(z,w)=(e^{2\pi k i/m}z,e^{2\pi il/m}w).
\end{equation}

Let $G_m$ be the group of order $2m^2$ generated by both $H_m$ and the involution $\tau:\mathbb{S}^3\rightarrow\mathbb{S}^3$ given by
\begin{equation}
\tau(z,w)=(w,z).
\end{equation}

Fix an integer $m\geq 2$.  We can then consider $G_m$-equivariant sweepouts of $\mathbb{S}^3$ of genus $m^2+1$.  The surfaces in our sweepout consist of two tori, one on each side of the Clifford torus, joined by $m^2$ tubes.   Precisely, we can start with the foliation of $\mathbb{S}^3$ by constant mean curvature surfaces $\{\Gamma_t\}_{t=0}^1$
\begin{equation}
\Gamma_t=\{(z,w)\in\mathbb{S}^3 : |z|^2= t\}.
\end{equation}

Then consider the $G_m$-equivariant family $\{\Gamma'_t\}_{t=0}^{1/2}$
\begin{equation}
\Gamma'_t=\Gamma_t\cup\tau(\Gamma_t).
\end{equation}

Note that $\Gamma'_0$ consists of the two circles $\{w=0\}$ and $\{z=0\}$ and $\Gamma'_{1/2}$ consists of the Clifford torus counted with multiplicity two.  We now must connect the two components of $\Gamma'_t$ by $m^2$ $G_m$-equivariant necks to obtain a sweepout of $\mathbb{S}^3$ by surfaces of genus $m^2+1$.  This is straightforward but we include the details.  

Let us parameterize the Clifford torus $C$ as $(z,w)=(\frac{1}{\sqrt{2}}e^{i\theta},\frac{1}{\sqrt{2}}e^{i\phi})$ for $\theta\in [0,2\pi]$ and $\phi\in [0,2\pi]$. In this way the Clifford torus $C=\Gamma_{1/2}$ can be thought of as a square grid $[0,2\pi]\times [0,2\pi]$ in $(\theta, \phi)$-coordinates, where the obvious identifications on the boundary of $[0,2\pi]\times [0,2\pi]$ result in a torus. 

Let us define $\phi_k$ for $k\in\{1,2,..m\}$ to be the line in the torus given by $\phi=\frac{2k\pi i}{m}$.  Similarly we set $\theta_j$ for $j\in\{1,2,..m\}$ to be the line $\theta=\frac{2j\pi i}{m}$.   Consider the $m^2$-squares $\{S_i\}_{i=1}^{m^2}$ into which the lines $\{\theta_i\}_{i=1}^m$ and $\{\phi_j\}_{j=1}^m$ divide $[0,2\pi]\times [0,2\pi]$.   The action $G_m$ restricted to $C$ has  as fundamental domain half of a square, i.e. a triangle $T_i$ in $S_i$ (cut through either diagonal).  Indeed, the action of $H_m$ on $C$ has any square $S_i$ as fundamental domain, but since $G_m$ includes the involution that maps the lines $\theta_*$ to $\phi_*$, and vice versa, a fundamental domain for the action of $G_m$ is cut in half.  Thus a fundamental domain for the action of $G_m$ on $\mathbb{S}^3$ is a polyhedron over the square $S_i$ (containing one triangle of $S_i$).

The necks will be added at the centers $C_i$ of the $m^2$ squares $\{S_i\}_{i=1}^{m^2}$ in the grid.  

Let us set
\begin{equation}
\mathcal{G} = \bigcup_{i=1}^m\phi_i\cup \bigcup_{j=1}^m\theta_j.
\end{equation}

Note that $C\setminus\{C_i\}_{i=1}^{m^2}$ retracts onto $\mathcal{G}$ and the retraction can be performed $G_m$-equivariantly.  

Let us define for $t\in [0,1/2]$ the path
\begin{equation}
\alpha(t)= (\sqrt{1-t} e^{\pi i /m},\sqrt{t} e^{\pi i /m}).
\end{equation}

The curve $\alpha(t)$  connects the point $(e^{\pi i /m},0)\in\mathbb{S}^3$ in the circle $\{w=0\}$ to the point $(\frac{1}{\sqrt{2}}e^{\pi i /m},\frac{1}{\sqrt{2}}e^{\pi i /m})$ in $C$ (i.e. one of the center points $C_i$).  For $\epsilon>0$ denote by $T_\epsilon(\alpha(t))$ the tubular neighborhood about $\alpha(t)$.  Note that 
\begin{equation}\label{tubular}
|\partial(T_\epsilon(\alpha([0,1/2])))|\rightarrow 0 \text{ as } \epsilon\rightarrow 0.
\end{equation}

Denote the disk (for $\epsilon$ small enough, depending on $t$):
\begin{equation}
D_{t,\epsilon} = T_\epsilon(\alpha([0,1/2]))\cap\Gamma_t.
\end{equation}

Choose a smooth function $\eta:[0,1/2]\rightarrow [0,\epsilon]$, so that $\eta(t)=0$ for $t\geq 1/2-\delta$ and $\eta(0)=0$ (where $\epsilon>0$ and $\delta>0$ will be chosen later).  The function $\eta$ will determine the thickness of the tubes added. 

We can now finally define our amended sweepouts, where tubes have been added and their attaching disks removed:
\begin{equation}
\Gamma''_t=\Gamma'_t\cup\bigcup_{g\in G_m} g(\partial T_{\eta(t)}(\alpha(t))\setminus\bigcup_{g\in G_m} g(D_{t,\eta(t)}).
\end{equation}

In light of \eqref{tubular} and the Catenoid Estimate (applied with $\Sigma=C$, the graph $\mathcal{G}$ and points the union of the $C_i$), it is clear that for $\epsilon$ and $\delta$ chosen appropriately, by adjusting $\Gamma''_t$ near the Clifford torus we can obtain a new family $\Gamma'''_t$,  satisfying
\begin{equation}\label{lessthantwice}
\sup_{t\in [0,1/2]} |\Gamma'''_t| < 2|C| = 4\pi^2.
\end{equation}
 
Except for one value of $t\in (0,1)$, $\Gamma'''_t$ is a genus $m^2+1$ piecewise smooth surface.   By small perturbation, we can then obtain an $G_m$-equivariant smooth sweepout of $\mathbb{S}^3$ by genus $m^2+1$ surfaces with all areas still less than $2|C|$.

Considering all sweepouts in the equivariant saturation $\Pi^{G_m}$ of our canonical family $\Gamma'''_t$, we can then define the equivariant min-max width:
\begin{equation}
\omega_1^{G_m}=\inf_{\Lambda(t)\in\Pi^{G_m}} \sup_{t\in[0,1]}|\Lambda(t)|.
\end{equation}

Theorem \ref{equivariant} then asserts the existence of a smooth embedded connected $G_m$-equivariant minimal surface 
in $\mathbb{S}^3$.  We obtain
\begin{theorem}\label{doubledtori}
For any integer $m\geq 2$, the min-max limit $\Sigma_m$ in the saturation $\Pi^{G_m}$ is an embedded $G_m$-equivariant minimal surface.  The genus of $\Sigma_m$ approaches infinity as $m\rightarrow\infty$.  Moreover $|\Sigma_m| < 4\pi^2$ and $\Sigma_m\rightarrow 2C$ in the sense of varifolds as $m\rightarrow\infty$.
\end{theorem}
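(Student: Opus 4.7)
The plan is to apply the equivariant min-max theorem (Theorem \ref{equivariant}) to the saturation $\Pi^{G_m}$ of the sweepout $\{\Gamma'''_t\}$ constructed above. First I would verify $0 < \omega_1^{G_m} < 4\pi^2$. The upper bound is \eqref{lessthantwice}, the direct output of the Catenoid Estimate. Positivity follows because the sweepout is topologically non-trivial: it links the exceptional circles $\{z=0\}$, $\{w=0\}$ with the Clifford torus configuration, so no equivariant isotopy fixing the boundary of $[0,1/2]$ can make every slice of arbitrarily small area. Theorem \ref{equivariant} then produces a $G_m$-equivariant, smooth, embedded stationary integral varifold of total mass $\omega_1^{G_m}$. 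By Frankel's theorem (applicable since $\mathbb{S}^3$ has positive Ricci curvature) its support $\Sigma_m$ is connected. The singular locus $\mathcal{S}$ of the $G_m$-action is a union of circles, so no positive-dimensional stratum with $\mathbb{Z}_2$ isotropy can sit inside a two-dimensional limit; consequently Theorem \ref{equivariant} forces no even multiplicity on $\Sigma_m$, and $|\Sigma_m| < 4\pi^2$.

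Next I would establish $\Sigma_m \to 2C$ in the varifold sense as $m \to \infty$. Uniform boundedness of $|\Sigma_m|$ by $4\pi^2$ gives precompactness in the varifold topology, so along a subsequence $\Sigma_m \to V$, a stationary integral varifold of mass at most $4\pi^2$. The $H_m$-equivariance of each $\Sigma_m$, together with the fact that the $H_m$-orbits become equidistributed on the standard tori $\{|z|^2 = \mathrm{const}\}$ as $m \to \infty$, forces $V$ to be invariant under the full maximal torus $T^2 \subset \mathrm{Isom}(\mathbb{S}^3)$. The only stationary integral varifolds in $\mathbb{S}^3$ supported on $T^2$-orbits are integer multiples of the Clifford torus $C$ (the exceptional circles have zero two-dimensional measure), so $V = kC$ with $k \in \{0,1,2\}$. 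The genus growth then follows from $V = 2C$ together with $G_m$-equivariance: $\Sigma_m$ is connected, $G_m$-equivariant with $|G_m| = 2m^2$, and converges in varifolds to $2C$, hence for large $m$ consists of two nearly-parallel copies of $C$ joined by a $G_m$-orbit of narrow necks. A generic neck has trivial isotropy, so its orbit has size $2m^2$; each neck contributes $+1$ to the genus of the resulting connected surface (by an Euler characteristic count), so the genus of $\Sigma_m$ grows at least linearly in $m^2$.

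The main obstacle is proving $\omega_1^{G_m} \to 4\pi^2$, which is needed to identify $k = 2$ rather than $k \in \{0,1\}$. The Catenoid Estimate gives the upper bound for free, but the matching lower bound requires a separate equivariant isoperimetric or barrier argument: one wants to say that the portion of the sweepout near $t = 1/2$ effectively encodes the ``two-sheeted'' configuration $2C$, and that any $G_m$-equivariant sweepout in $\Pi^{G_m}$ must produce at least one slice of area close to $4\pi^2$ for large $m$. Without this step, one could only rule out multiplicity-two collapse onto $C$, not subtler degenerations to a single copy of $C$ or to a lower-dimensional limit; the full equivariant min-max machinery of \cite{Ke2} is what closes this gap.
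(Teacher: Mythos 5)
Your proposal correctly identifies the overall structure (apply Theorem \ref{equivariant} with the Catenoid Estimate giving the upper bound $\omega_1^{G_m}<4\pi^2$, then deduce $\Sigma_m\to 2C$ by equivariance), but there are three genuine gaps where your argument either fails or leaves a step open that the paper closes differently and more cheaply.

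First, your multiplicity-one argument is wrong as stated. You assert that ``the singular locus $\mathcal{S}$ of the $G_m$-action is a union of circles, so no positive-dimensional stratum with $\mathbb{Z}_2$ isotropy can sit inside a two-dimensional limit; consequently Theorem \ref{equivariant} forces no even multiplicity on $\Sigma_m$.'' This is backwards: the fixed-point circles of the involution $\tau(z,w)=(w,z)$ and its $H_m$-conjugates lie \emph{inside} the Clifford torus, so a two-dimensional limit certainly can contain $\mathbb{Z}_2$-isotropy segments. Theorem \ref{equivariant} does not rule out even multiplicity; it constrains the parity of multiplicity when such a segment is present. The paper's actual multiplicity argument is completely different and purely metric: if the multiplicity were $k\geq 2$, then $k|\Sigma_m|=\omega_1^{G_m}<4\pi^2$ forces $|\Sigma_m|<2\pi^2$, and by the resolution of the Willmore conjecture \cite{MN2} the only embedded minimal surface in $\mathbb{S}^3$ of area strictly below $2\pi^2$ is a round equator, which is not $G_m$-equivariant.

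Second, you declare that ``the main obstacle is proving $\omega_1^{G_m}\to 4\pi^2$'' in order to exclude $V=C$ in the subsequential limit, and you leave this step open. The paper does not need and does not prove such a lower bound. Instead, once one knows $V=kC$ with $k\in\{1,2\}$ (and $k=0$ is excluded directly by the monotonicity formula, not by an isoperimetric argument), the case $k=1$ is eliminated as follows: $\Sigma_m\to C$ with multiplicity one implies smooth graphical convergence, hence for large $m$ the surface $\Sigma_m$ is itself a rotated Clifford torus; by uniqueness of the $G_m$-invariant Clifford torus for large $m$, $\Sigma_m=C$; but $C$ contains a segment of the $\mathbb{Z}_2$-isotropy locus, so by the parity clause of Theorem \ref{equivariant} it would have to appear with even multiplicity, contradicting multiplicity one. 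This is exactly where the $\mathbb{Z}_2$-isotropy clause is actually used, and it replaces the width lower bound you were worried about.

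Third, your genus blow-up argument (neck-counting via orbit sizes and Euler characteristic) is a heuristic, not a proof; for instance it presupposes a neck structure for $\Sigma_m$ that you have not established. The paper's argument is shorter and rigorous: if the genus of $\Sigma_m$ were bounded along a subsequence, Choi--Schoen compactness \cite{CS} would give smooth subsequential convergence to a closed embedded minimal surface (in particular with multiplicity one), contradicting $\Sigma_m\to 2C$.

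Your upper bound and the equivariance-forces-$T^2$-invariance step in the varifold limit do match the paper.
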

\begin{remark}
In fact one can classify the possible compressions and show that the genus of $\Sigma_m$ is $m^2+1$ when $m$ is large.  See \cite{Ke2} for more details.
\end{remark}
\begin{proof}
Note that $\Sigma_m$ must occur with multiplicity $1$.  Indeed, if the multiplicity were $k>1$, then by \eqref{lessthantwice} we have $k|\Sigma_m| < 2|C|$, where $|C| = 2\pi^2$, the area of the Clifford torus. Thus $|\Sigma_m| < |C|$.  By the resolution of the Willmore conjecture \cite{MN2}, in this case $\Sigma_m$ can only be a round equator, which is not $G_m$-equivariant.  Thus the multiplicity of $\Sigma_m$ must be one.

It remains to show that $\Sigma_m\rightarrow 2C$.  Indeed suppose some subsequence (not relabeled) of $\Sigma_m$ converges to a stationary varifold $V$ different from $2C$.  By the monotonicity formula, $V$ cannot be equal to $0$.  Moreover, the support of $V$ is invariant under the limiting groups $H_\infty=\mathbb{S}^1\times\mathbb{S}^1$ and the involution $\tau$.  Thus any point in the support of $V$ not on the Clifford torus forces an entire cmc torus parallel to $C$ to be contained in the support of $V$.  None of these tori are stationary except the Clifford torus. It follows that the support of $V$ is contained on the Clifford torus $C$.  By the Constancy Theorem, $V=kC$ for some integer $k$.  By the strict upper bound of $4\pi^2$ on the equivariant widths, $k$ is equal to either $1$ or $2$.  If $k=1$, it follows that $\Sigma_m\rightarrow C$ smoothly.  This implies $\Sigma_m$ is a rotated Clifford torus for every large $m$.   But $C$ is the unique such Clifford torus invariant under $G_m$ when $m$ is large.  Thus $\Sigma_m=C$ for large $m$.  On the other hand, by Theorem \ref{equivariant}, $\Sigma_m$ is never equal to the Clifford torus $C$ with multiplicity $1$ since in this case it would contain a segment of the $\mathbb{Z}_2$-isotropy singular set and yet have odd multiplicity.  Thus $V=2C$.  It follows that $\Sigma_m\rightarrow 2C$. By the compactness theorem of Choi-Schoen \cite{CS} it follows that the genus of $\Sigma_m$ approaches infinity as $m\rightarrow\infty$.

\end{proof}

\section{Higher dimensional case}
In this section, we prove Theorem \ref{almgrenpitts} when $4\leq n+1\leq 7$, which we restate:

\begin{theorem}
For $4\leq (n+1) \leq 7$, the Almgren-Pitts width (with $\mathbb{Z}$ or $\mathbb{Z}_2$ coefficients) of an orientable  $(n+1)$-manifold $M^{n+1}$ with positive Ricci curvature is achieved by an orientable index $1$ minimal hypersurface with multiplicity $1$.
\end{theorem}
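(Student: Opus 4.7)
The plan is to mimic the three-dimensional Almgren-Pitts argument from the previous section almost verbatim, with the catenoid estimate (Theorem~\ref{catenoidinthreemanifold}) replaced by a direct cylinder construction that works as soon as $n\ge 3$. Suppose for contradiction the theorem fails. By Zhou's dichotomy \cite{Z}, the width of $M$ is realized with multiplicity two by a non-orientable embedded minimal hypersurface $\Sigma$. Lift to the connected double cover $\pi\colon\tilde M\to M$ with deck involution $\tau$ in which $\Sigma$ lifts to a two-sided orientable minimal hypersurface $\tilde\Sigma$. Positive Ricci curvature lifts to $\tilde M$, so $\tilde\Sigma$ is unstable. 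Lemma~3.4 of \cite{MN} applied to $\tilde\Sigma$ gives a sweepout $\{\tilde\Sigma_t\}_{t\in[0,1]}$ of $\tilde M$ with $\tilde\Sigma_{1/2}=\tilde\Sigma$, $|\tilde\Sigma_t|<|\tilde\Sigma|=2|\Sigma|$ for $t\neq 1/2$, and, for $t$ near $1/2$, a parallel foliation of a tubular neighbourhood $T_\epsilon(\tilde\Sigma)$; after $\tau$-symmetrization we arrange $\tau(\tilde\Sigma_t)=\tilde\Sigma_{1-t}$.

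The substitute for the catenoid estimate in the regime $n\ge 3$ is the following cylinder surgery. Proposition~\ref{error} is dimension-independent in its proof (only the Fermi-coordinate Jacobi expansion is used), so applied with $\phi\equiv 1$ together with positive Ricci curvature it yields
\[
|\partial T_\epsilon(\tilde\Sigma)|\leq 2|\tilde\Sigma|-\gamma\,\epsilon^{2}
\]
with $\gamma=\int_{\tilde\Sigma}(|A|^{2}+\mathrm{Ric}(N,N))>0$, for all sufficiently small $\epsilon$. Fix $p\in\tilde\Sigma$, set $q=\tau(p)$, excise geodesic $n$-discs of radius $\epsilon$ around $p,q$ from each of the two sheets of $\partial T_\epsilon(\tilde\Sigma)$, and glue in the corresponding pair of short cylinders of radius $\epsilon$ spanning the normal direction. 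The excised caps contribute $O(\epsilon^{n})$ to the mass and the inserted cylinders contribute $O(\epsilon\cdot\epsilon^{n-1})=O(\epsilon^{n})$. Since $n\ge 3$ one has $\epsilon^{n}=o(\epsilon^{2})$, so the resulting $\tau$-invariant piecewise-smooth hypersurface $S_\epsilon$ satisfies $|S_\epsilon|\le 2|\tilde\Sigma|-\tfrac{\gamma}{2}\epsilon^{2}$. By then varying the cylinder radius in $[0,\epsilon]$ and simultaneously the normal separation---exactly the scheme of the proof of Theorem~\ref{catenoidinthreemanifold} but with no logarithmic cutoff needed---one produces a $\tau$-equivariant interpolation inside $T_\epsilon(\tilde\Sigma)$ between $S_\epsilon$ and the MN slices $\tilde\Sigma_{1/2\pm\delta}$, with all intermediate areas uniformly bounded by $2|\tilde\Sigma|-\gamma\epsilon^{2}/4$.

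Splicing produces a $\tau$-equivariant family $\{\tilde\Sigma'_t\}$ in $\tilde M$ with $\tilde\Sigma'_{1/2}=S_\epsilon$ still $\tau$-invariant. Pushing forward by $\pi$ halves the mass at the $\tau$-invariant slice $S_\epsilon$ and preserves mass on every other slice (whose $\tau$-image $\tilde\Sigma'_{1-t}$ is a distinct member of the family and thus disjoint from $\tilde\Sigma'_t$), giving a one-parameter family of mod-$2$ cycles in $M$ whose supremum mass is strictly less than $2|\Sigma|$. By \cite{MN4} we upgrade to $\mathbf F$-continuity and obtain an admissible Almgren-Pitts sweepout of $M$, contradicting the definition of the width. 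The sole dimension-dependent input is the comparison $\epsilon^{n}<\epsilon^{2}$, which forces the full catenoid estimate only in the critical case $n=2$.

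The main obstacle I expect is technical bookkeeping: verifying that the cylinder surgery is spliced continuously in the $\mathbf F$-metric into the MN family, is genuinely $\tau$-equivariant, and projects to a family representing the correct nontrivial class in the space of mod-$2$ cycles on $M$---precisely the verification that the three-dimensional Almgren-Pitts proof handles for the catenoid construction.
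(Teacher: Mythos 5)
Your proposal follows the same strategy as the paper and correctly isolates the one dimension-dependent input: excising $n$-balls (mass $O(\epsilon^n)$) and gluing a cylinder of lateral area $O(\epsilon\cdot\epsilon^{n-1})=O(\epsilon^n)$ to each end of a tubular neighbourhood whose boundary already sits $\gamma\epsilon^2$ below $2|\tilde\Sigma|$; since $n\ge 3$ gives $\epsilon^n=o(\epsilon^2)$, the surgered cycle stays a definite amount below, so the catenoid estimate is unneeded. The paper does exactly this; it only differs in bookkeeping. It starts from Zhou's Proposition~3.6 (rather than \cite{MN} Lemma~3.4) for the foliated sweepout of a lift of $M\setminus\Gamma$, and it tracks the two parameters (normal height $h$ and cylinder radius $t$) separately, showing the worst case $\sup_{t}\bigl(2Cht^{n-1}-2ct^n\bigr)=O(h^n)$ occurs at $t\sim h$, so the bound holds uniformly as the hole is opened all the way to a \emph{fixed} radius $R$. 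That last point is where your write-up is a little off: you speak of ``varying the cylinder radius in $[0,\epsilon]$,'' but one must open the hole to a radius $R$ independent of $\epsilon$ (so that the mass drops by a definite amount $\sim R^n$) before handing off to the retraction $R_t$ onto the lower-dimensional skeleton; otherwise the retraction phase cannot be absorbed. Similarly, ``pushing forward by $\pi$ halves the mass at $S_\epsilon$'' is really the statement that the cycle being produced lives in $M$, and its $\sigma$-invariant preimage in $\tilde M$ (where the Fermi-coordinate estimates are cleanest) has twice the mass; phrasing it as $\pi_*$ of a mod-$2$ flat chain would give zero, so this deserves to be stated more carefully. These are exactly the ``bookkeeping'' items you flag; the mathematical content of the argument is the same as the paper's.
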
  

\begin{proof}
By Theorem 1.1 in \cite{Z}, if it failed, the width of $M^{n+1}$ is achieved by a closed embedded non-orientable minimal hypersurface $\Gamma^n$ with multiplicity $2$.  Similarly to the two-dimensional case, for some finite set $\mathcal{P}\subset\Gamma^n$, there exists a retraction $R_t$ from $\Gamma^n\setminus\mathcal{P}$ to the lower dimensional skeleton of $\Gamma^n$.  Let us assume that the cardinality of $\mathcal{P}$ is $1$ as the general case follows analogously.

We can find a double cover $\tau:\tilde{M}\rightarrow M$ so that $\tau^{-1}(\Gamma^n)$ is an orientable embedded minimal hypersurface, say $\Sigma^n$ of $\tilde{M}$ and so that both components $H_1$ and $H_2$ of $\tilde{M}\setminus\Sigma^n$ are diffeomorphic to $M\setminus\Gamma^n$ (see Proposition 3.7 in \cite{Z}).   By Proposition 3.6 in \cite{Z}, for some $\epsilon>0$ and some $\delta >0$, we can construct a sweepout $\{\Sigma_t\}_0^1$ of $H_1$ so that $\Sigma_0=\Sigma^n$ with $\sup_{t\in[\delta,1]}\mathcal{H}^n(\Sigma_t^n)\leq\mathcal{H}^n(\Sigma^n)-\epsilon$ and so that for $t\in [0,\delta]$, the sweepout $\{\Sigma_t\}$ consists of surfaces parallel (via the exponential map) to $\Sigma^n$, i.e. $\Sigma_t=\{\exp_p (tN(p)) : p\in\Sigma\}$.

We now will amend the sweepout $\{\Sigma_t\}$ to produce a new sweepout $\{\Lambda_t\}$ that for some $\delta'\leq\delta$ agrees with $\{\Sigma_t\}$ for $t\in [\delta',1]$ but in the interval $[0,\delta']$ it ``opens up via cylinders'' at the points $\tau^{-1}(\mathcal{P})$.  Moreover, $\Lambda_0$ will consist of lower dimensional skeleta of $\Sigma^n$ (which have zero $n$-dimensional Hausdorff measure). 

The key point, of course, is that in addition $\{\Lambda_t\}$ will satisfy
\begin{equation}
\sup_{t\in [0,1]}\mathcal{H}^n(\Lambda_t) < \mathcal{H}^n(\Sigma^n).
\end{equation}
The projected sweepout $\tau(\Lambda_t)$ then gives rise to a sweepout of $M^{n+1}$ all of whose slices have areas strictly less than that of $2\Gamma^n$, contradicting the definition of width.  This will complete the proof of Theorem \ref{almgrenpitts}. Let us now construct the desired sweepout $\{\Lambda_t\}$ when $t\in [0,\delta]$.  It will consist of $\{\Sigma_t\}_{t=0}^1$ where a cylinder has been added and the corresponding disk from $\Sigma_t$ removed.

Because on a small enough scale, $\Sigma^n$ is roughly Euclidean, we have the following Euclidean volume comparisons.  Namely, there exists an $R>0$ so that for any $p\in\Sigma^n$ and $t\leq R$ there holds:
\begin{equation}
ct^n\leq \mathcal{H}^n(\Sigma^n\cap B_t(p))\leq Ct^n,
\end{equation}
\begin{equation}
ct^{n-1}\leq \mathcal{H}^{n-1}(\Sigma^n\cap \partial B_t(p))\leq Ct^{n-1}.
\end{equation}
Moreoever there exists $h_0>0$ so that whenever $h\leq h_0$ one has Euclidean-type area bounds for the small cylinders $$C_{t,h}(p):=\{\exp_p(tN(p)  : p\in\Sigma^n\cap\partial B_t(p)), t\in[-h,h]\}$$ diffeomorphic to $\mathbb{S}^{n-1}\times [-h,h]$ based at $\Sigma^n$.  That is, there holds
\begin{equation}\label{cylinders}
cht^{n-1}\leq \mathcal{H}^n(C_{t,h}(p))\leq Cht^{n-1}.
\end{equation}
Also for $h\leq h_0$ we have volume bounds for small balls pushed via the exponential map. Setting $B_{h,t}(p):= \{\exp_x(hN(p)) : x\in B_t(p)\cap\Sigma^n\}$, we have
\begin{equation}\label{expofballs}
ct^n\leq\mathcal{H}^n(B_{h,t}(p))\leq Ct^n.
\end{equation}

Since $\Sigma^n$ is minimal, it follows from \eqref{simple} that we have for $h\in [0,\delta]$
\begin{equation}\label{simple2}
\mathcal{H}^n(\Sigma_h)\leq \mathcal{H}^n(\Sigma^n)-Ah^2.
\end{equation}

Fix $p\in\mathcal{P}$. For $t\in [0,R]$, let us denote the amended surfaces 
\begin{equation}
\Lambda_{h,t}:= \Sigma_h\cup (C_{t,h}(p)\setminus B_{t,h}(p))\cup (C_{t,h}(\tau(p))\setminus B_{t,h}(\tau(p))).
\end{equation}

It follows from \eqref{cylinders}, \eqref{expofballs} and \eqref{simple2} that we can then estimate
\begin{equation}\label{areaa}
\mathcal{H}^n(\Lambda_{h,t})\leq \mathcal{H}^n(\Sigma^n)+2Cht^{n-1}-2ct^n-Ah^2.
\end{equation}

The maximum value of the function $2Cht^{n-1}-2ct^n$ occurs at 
\begin{equation}\label{max}
t=\frac{C(n-1)h}{cn}.
\end{equation}
Plugging \eqref{max} into \eqref{areaa} it follows that for some $B>0$ (independent of $t$)
\begin{equation}
\mathcal{H}^n(\Lambda_{h,t})\leq\mathcal{H}^n(\Sigma^n)+Bh^n-Ah^2.
\end{equation}
Shrinking $h_0$ yet again we have that for all $h\leq h_0$ and $t\leq R$, 
\begin{equation}\label{quadbound}
\mathcal{H}^n(\Lambda_{h,t})\leq \mathcal{H}^n(\Sigma^n)-\frac{A}{2}h^2.
\end{equation}
Moreoever, when $t=R$ we obtain
\begin{equation}
\mathcal{H}^n(\Lambda_{h,R})\leq \mathcal{H}^n(\Sigma^n)-2cR^n+2CR^{n-1}h - Ah^2.
\end{equation}
Shrinking $h_0$ again so that $h_0\leq\frac{c}{2C}$, we obtain for $h\leq h_0$,
\begin{equation}\label{opened}
\mathcal{H}^n(\Lambda_{h,R})\leq 2\mathcal{H}^n(\Sigma^n)-cR^n - Ah^2.
\end{equation}
Thus by ``opening the hole" up to time $t=R$ we decrease area by a definite amount depending on $R$ and not on $h$.  
Using \eqref{quadbound}, \eqref{opened} along with the retraction $R_t$ we can now argue identically as in the proof of the Catenoid Estimate to extend the sweepout $\{\Lambda_{t,h}\}_{t=0}^R$ to $\{\Lambda_{t,h}\}_{t=0}^1$ foliating the entire (positive) tubular neighborhood of $\Sigma^n$ with all areas strictly less than $\mathcal{H}^n(\Sigma^n)$.  Set $\delta'=h$.  Then since $\Lambda_{0,\delta'}$ agrees with $\Sigma_{\delta'}$, by concatenating $\{\Lambda_{t,\delta'}\}_{t=0}^1$ with $\{\Sigma_t\}_{t=\delta'}^1$ we obtain the desired sweepout.  
\end{proof}

\end{document}